\theoremstyle{plain}
\newtheorem{theorem}{Theorem}[section]
\newtheorem{lemma}[theorem]{Lemma}
\newtheorem{prop}[theorem]{Proposition}
\theoremstyle{definition}
\newtheorem{definition}[theorem]{Definition}
\newtheorem{remark}[theorem]{Remark}
\newtheorem{example}[theorem]{Example}
\newtheorem{cor}[theorem]{Corollary}
\theoremstyle{remark}
\begin{document}
	\title[On the best coapproximation problem  in $\ell_1^n$]{On the best coapproximation problem  in $ \ell_1^n$}
	\author[Sain, Sohel, Ghosh and  Paul  ]{Debmalya Sain, Shamim Sohel, Souvik Ghosh and Kallol Paul }

	\newcommand{\acr}{\newline\indent}
	\address[Sain]{Departamento de Analisis Matematico\\ Universidad de Granada\\ E-18071- Granada \\SPAIN}
	\email{saindebmalya@gmail.com}
	
	\address[Sohel]{Department of Mathematics\\ Jadavpur University\\ Kolkata 700032\\ West Bengal\\ INDIA}
	\email{shamimsohel11@gmail.com}
	
	\address[Ghosh]{Department of Mathematics\\ Jadavpur University\\ Kolkata 700032\\ West Bengal\\ INDIA}
	\email{sghosh0019@gmail.com}

	\address[Paul]{Department of Mathematics\\ Jadavpur University\\ Kolkata 700032\\ West Bengal\\ INDIA}
	\email{kalloldada@gmail.com}

	\thanks{The research of Dr. Debmalya Sain is supported by grant PID2021-122126NB-C31 funded  by MCIN/AEI/10.13039/501100011033 and by ``ERDF A way of making Europe" under the mentorship of Professor Miguel Martín.  The second and third author would like to thank  CSIR, Govt. of India, for the financial support in the form of Junior Research Fellowship under the mentorship of Prof. Kallol Paul.} 
	\subjclass{Primary 46B20, Secondary 47L05}
	\keywords{best coapproximations; coproximinal subspace; co-Chebyshev subspace;  Birkhoff-James orthogonality; dual space}

	\begin{abstract}
		We study the best coapproximation problem in the Banach space  $ \ell_1^n, $ by using Birkhoff-James orthogonality techniques. Given a subspace $\mathbb{{Y}}$ of $\ell_1^n$, we completely identify the elements $x$ in $\ell_1^n,$ for which best coapproximations to $x$ out of $\mathbb{{Y}}$ exist. The methods developed in this article are computationally effective and it allows us to present an algorithmic approach to the concerned problem. We also identify  the coproximinal subspaces and co-Chebyshev subspaces of $\ell_1^n$. 
	\end{abstract}
	
	\maketitle
	\section{Introduction}

	The concept of \emph{best coapproximation} in Banach space  has been introduced by 	Franchetti and Furi in \cite{FF}. Thereafter, this complementary notion of the best approximation has been studied by many mathematicians \cite{FF, N, PS, RS}.	 Finding the best coapproximation  in Banach spaces is known to be a difficult problem especially from the computational point of view. Very recently some progress has been made in \cite{SSGP}, where the problem was completely characterized in \emph{the space of diagonal matrices} and in the same article  the authors solved the best coapproximation problem computationally in the subspaces of $ \ell_\infty^n.$  An interesting natural choice for further study in this direction is  to consider the best coapproximation problem in the subspaces of the dual space $\ell_1^n.$  It is known \cite {LT, PS} that  given a subspace $\mathbb{{Y}} $ of a Banach space $ \mathbb{X}$ and an element $ x \notin \mathbb{{Y}},$  $y_0$ is a best coapproximation to $x$ out of $\mathbb{{Y}}$ if and only if there exists a norm one projection from $ span \, \{x, \mathbb{{Y}}\} $ to $\mathbb{{Y}}.$ On the other hand, a little checking on part of the reader should suffice to convince that the above mentioned theoretical characterization is not particularly effective in explicitly finding the best coapproximation(s), if it exist.  Our main aim in this article is to deal with  the best coapproximation problem in $\ell_1^n$ from a computational point of view and to solve the problem explicitly. We completely identify the subspaces of $\ell_1^n$  which are coproximinal and co-Chebyshev.  It is time to mention the basic terminologies and the notations to be used throughout the article.
	
	\smallskip

	We use the symbols $ \mathbb{X}, \mathbb{{Y}}$ to denote real Banach spaces, unless stated otherwise. Let $\theta$ denote the zero vector of any Banach space, other than the scalar field. The usual notations  $ B_{\mathbb{X}} = \{ x \in \mathbb{X} : \| x\| \leq 1\}$ and $ S_{\mathbb{X}} = \{ x \in \mathbb{X} : \| x\| = 1\} $ are used to denote the unit ball and the unit sphere of $ \mathbb{X}, $ respectively. An element $ x \in B_{\mathbb{X}}$ is said to be an extreme point of the unit ball if $ x = (1-t)y +tz, $ for some $ t \in (0,1)$ and for some $y,z \in B_{\mathbb{X}},$  then $ x = y =z.$ The collection of all extreme points of the unit ball $ B_{\mathbb{X}} $  is denoted by $Ext(B_{\mathbb{X}}).$ The  dual of a Banach space $ \mathbb{X}$ is denoted by  $ \mathbb{X}^*.$  Given any $f \in \mathbb{X}^*,$  $ M_f $ denotes the norm attainment set of $f$, i.e.,$ M_f:=\{ x \in S_{\mathbb{X}} : |f(x)|= \|f\|\}.$ We note that $M_f \neq \emptyset,$ whenever $\mathbb{X}$ is reflexive. 
	Given any $ m \times n$ matrix $A$,  $ A^t$ denotes the transpose of $A.$ Let $\mathbb{L}(\mathbb{X}, \mathbb{Y})$  be denoted as  the Banach space of all bounded linear operators from $\mathbb{X}$ to $\mathbb{Y},$ endowed with the usual operator norm. For given any $ T \in \mathbb{L}(\mathbb{X},\mathbb{Y}),$ the kernel of $T,$ denoted by $ker~T,$ is defined as $ker~T:=\{ x \in \mathbb{X} : Tx= \theta \in \mathbb{Y}\}.$ Accordingly, the kernel of $f \in \mathbb{X}^*$ is denoted by $ker~f,$ i.e., $ker~f = \{x \in \mathbb{X} : f(x) = 0\}.$   Let us now recall the following definition of best coapproximation which is of fundamental importance in our context.
	
	\begin{definition}
		Let $ \mathbb{X} $ be a Banach space and let $ \mathbb{Y} $ be a subspace of $ \mathbb{X}. $ Given any $ x \in \mathbb{X}, $ we say that $ y_0 \in \mathbb{Y} $ is a best coapproximation to $ x $ out of $ \mathbb{{Y}} $ if $ \| y_0 - y \| \leq \| x - y \| $ for all $ y \in \mathbb{Y}. $
	\end{definition}
	
	In a Banach space (even in the finite-dimensional case), neither the  existence nor the uniqueness of best coapproximation(s) is guaranteed. A subspace $ \mathbb{Y} $ of the Banach space $ \mathbb{X} $ is said to be coproximinal if a best coapproximation to any element of $ \mathbb{X} $ out of $ \mathbb{Y} $ exists. A coproximinal subspace $ \mathbb{Y} $ is said to be co-Chebyshev if the best coapproximation is unique in each case. Given $x \in \mathbb{X}$ and a subspace $\mathbb{{Y}}$ of $ \mathbb{X}, $ we denote by $\mathcal{R}_\mathbb{{Y}}(x)$ the set of all best coapproximations to $x$ out of $ \mathbb{{Y}}.$ We also define $\mathcal{D}(\mathcal{R}_{\mathbb{Y}})$ as the collection of all such $x \in \mathbb{X}$ such that $\mathcal{R}_{\mathbb{Y}}(x) \neq \emptyset.$ 
	
	\smallskip
	
	The study of best coapproximation has an immediate connection to \emph{the concept of Birkhoff-James orthogonality.} Following the pioneering articles \cite{B, J}, given any two elements $ x, y $ in a Banach space $ \mathbb{X}, $ we say that \emph{$ x $ is Birkhoff-James orthogonal to $ y, $} written as $ x \perp_B y, $ if $ \| x+\lambda y \| \geq \| x \| $ for all $ \lambda \in \mathbb{R}. $ The said connection can be stated (and verified in a rather straightforward manner) in terms of Birkhoff-James orthogonality as follows:  Given a subspace $ \mathbb{Y} $ of a Banach space $ \mathbb{X} $ and an element $ x \in \mathbb{X}, $ $ y_0 \in \mathbb{Y} $ is a best coapproximation to $ x $ out of $ \mathbb{Y} $ if and only if $ \mathbb{Y} \perp_B (x-y_0) $ i.e., $ y \perp_B (x-y_0) $ for all $ y \in \mathbb{Y}. $ In this article, our main objective is to completely solve the problem of finding the best coapproximation(s) to a given element in $  \ell_1^n$ out of a given subspace $ \mathbb{Y} $ of $  \ell_1^n$, provided the best coapproximation(s) exist. As mentioned before, it is known that given a subspace $\mathbb{{Y}} $ of $ \ell_1^n$ and $ x \notin \mathbb{{Y}},$  $y_0$ is a best coapproximation to $x$ out of $\mathbb{{Y}}$ if and only if there exists a norm one projection from $ span \, \{x, \mathbb{{Y}}\} $ to $\mathbb{{Y}}.$  However, to the best of our knowledge, there is no method available to explicitly find these norm one projections.  We present a computationally effective solution to this problem, resulting in an algorithmic approach to the best coapproximation problem in $ \ell_1^n.$ It also allows us to discuss the existence of best coapproximations in the said setting. We also  completely identify  the coproximinal and co-Chebyshev subspaces of $  \ell_1^n.$

	\section{Main Results}

		 We first  introduce the following definition.

		\begin{definition} \label{component}
		
		Let $ \mathcal{A} = \{  \widetilde{a_1}, \widetilde{a_2}, \ldots, \widetilde{a_m} \}  $ be a set of linearly independent elements in $ \ell_1^n, $  where  $\widetilde{a_k}= (a^k_1, a^k_2, \ldots, a^k_n ),  $ for each $ 1 \leq k \leq m. $  Considering   $  \widetilde{a_1}, \widetilde{a_2}, \ldots, \widetilde{a_m} $ as column vectors,   we form the $ n \times m $ matrix $ A= (a_{ij})_{ 1 \leq i \leq n, 1 \leq j \leq m},$ where $ a_{ij}= a_i^j.$  
		\begin{enumerate}
			\item [(i)]  For each $ i \in \{1,2,\ldots,n\}, $ the $i$-th component  of  $\mathcal{A}$ is defined  as the $i$-th row of $A,$ i.e., 
			$\left( a_{i} ^1 ,a_{i}^2 ,\ldots,a_{i} ^m \right). $ Whenever the context is clear, we simply refer to the $i$-th component of $\mathcal{A}$ as the $i$-th component.
			\item  [(ii)]  The $i$-th component and the $j$-th component are said to  be \emph{equivalent} if 
				$\left( a_{i} ^1 ,a_{i}^2 ,\ldots,a_{i} ^m \right) $ = $c \left( a_{j} ^1 ,a_{j}^2 ,\ldots,a_{j} ^m \right),  $ where $~c( \neq 0 ) \in \mathbb{R}.$
	
			\item[(iii)] The \emph{zero set} $ \mathcal{Z}_{\mathcal{A}}$ of $\mathcal{A}$ is defined as 
			\[
			\mathcal{Z}_{\mathcal{A}} = \Big\{  i \in \{1, 2, \ldots, n\} : \left( a_{i} ^1 ,a_{i}^2 ,\ldots,a_{i} ^m \right) =  (0, 0, \ldots, 0) \Big\}.
			\]
		\end{enumerate}

	\end{definition}

 In the following proposition, we show the basis invariance of \emph{the equivalent components and the zero set.}

		\begin{prop}\label{propo}
		Let $ \mathbb{Y} $ be a subspace of $ \ell_1^n$ and let $\mathcal{A} = \{  \widetilde{a_1}, \widetilde{a_2}, \ldots, \widetilde{a_m} \}, \mathcal{B}= \{  \widetilde{b_1}, \widetilde{b_2}, \ldots, \widetilde{b_m} \}$ be two bases of $ \mathbb{Y},$
		where $ \widetilde{a_k}= (a^k_{1}, a^k_{2}, \ldots, a^k_{n})  $ and $ \widetilde{b_k} = (b^k_{1}, b^k_{2}, \ldots, b^k_{n}) $, for any $ 1 \leq k \leq m .$  Then 
		\begin{itemize}
		
		\item[(i)] the $i$-th and $j$-th component of $\mathcal{A}$ are equivalent if and only if the $i$-th and $j$-th component of $\mathcal{B}$ are equivalent.
			\item[(ii)] $\mathcal{Z}_{\mathcal{A}} = \mathcal{Z}_{\mathcal{B}}.$
		\end{itemize}
	\end{prop}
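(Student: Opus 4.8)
The plan is to reduce both statements to a single change-of-basis identity and then exploit the invertibility of the transition matrix. Since $\mathcal{A}$ and $\mathcal{B}$ are both bases of the same subspace $\mathbb{Y}$, each $\widetilde{b_k}$ is a unique linear combination of the $\widetilde{a_j}$; writing $\widetilde{b_k} = \sum_{j=1}^m p_{jk}\,\widetilde{a_j}$, the coefficient matrix $P = (p_{jk})_{1 \le j,k \le m}$ is an invertible $m \times m$ matrix, its inverse being the matrix expressing the $\widetilde{a_j}$ in terms of the $\widetilde{b_k}$. In terms of the $n \times m$ matrices $A$ and $B$ whose columns are the $\widetilde{a_j}$ and the $\widetilde{b_k}$ respectively, this reads $B = AP$.

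The crucial observation I would use is how this identity acts at the level of components. Comparing the $i$-th coordinate on both sides of $\widetilde{b_k} = \sum_j p_{jk}\,\widetilde{a_j}$ gives $b_i^k = \sum_{j=1}^m a_i^j\, p_{jk}$ for every $i$ and $k$. Denoting the $i$-th component of $\mathcal{A}$ by the row vector $R_i^{\mathcal{A}} = (a_i^1, \ldots, a_i^m)$ and similarly $R_i^{\mathcal{B}}$, this says precisely that $R_i^{\mathcal{B}} = R_i^{\mathcal{A}}\, P$ for each $i$; that is, right-multiplication by the single invertible matrix $P$ simultaneously carries every component of $\mathcal{A}$ to the corresponding component of $\mathcal{B}$.

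Both assertions would then follow from the invertibility of $P$. For (ii), $R_i^{\mathcal{A}} = (0,\ldots,0)$ if and only if $R_i^{\mathcal{A}}P = (0,\ldots,0)$, using that both $P$ and $P^{-1}$ send a zero row to a zero row, which is exactly $R_i^{\mathcal{B}} = (0,\ldots,0)$; hence $\mathcal{Z}_{\mathcal{A}} = \mathcal{Z}_{\mathcal{B}}$. For (i), if the $i$-th and $j$-th components of $\mathcal{A}$ are equivalent, say $R_i^{\mathcal{A}} = c\,R_j^{\mathcal{A}}$ with $c \neq 0$, then right-multiplying by $P$ yields $R_i^{\mathcal{B}} = c\,R_j^{\mathcal{B}}$, so the corresponding components of $\mathcal{B}$ are equivalent with the very same constant $c$; the converse is identical, multiplying instead by $P^{-1}$. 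This establishes (i).

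I expect no genuine obstacle here: once the relation $B = AP$ is correctly transposed into the row-level statement $R_i^{\mathcal{B}} = R_i^{\mathcal{A}}P$, both conclusions are immediate consequences of $P$ being invertible. The only point that warrants care is the bookkeeping — keeping straight that the change of basis multiplies the component (row) vectors on the \emph{right} rather than on the left, and observing that the proportionality constant $c$ is preserved verbatim under this right action, so that no nonzero scalar is lost or introduced in passing between the two bases.
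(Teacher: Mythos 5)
Your proposal is correct and follows essentially the same route as the paper: both write the change of basis as $B = AP$ (the paper calls the transition matrix $C$), observe that each component (row) of $\mathcal{B}$ is the corresponding component of $\mathcal{A}$ multiplied on the right by the invertible transition matrix, and deduce both (i) and (ii) from invertibility. No substantive difference.
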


	\begin{proof}
		(i)	 Consider the two matrices  $A$ and $B$ as constructed  in Definition \ref{component}. Since $\mathcal{A}$ and $ \mathcal{B} $ are two bases of $\mathbb{Y},$  there exists an invertible matrix $ C= (c_{ij})_{1 \leq i, j \leq m}$ such that $ B= AC,$ where $ b_{ij}= \sum_{k=1}^{m} a_{ik} c_{kj},$ for any $ 1 \leq i \leq n, 1 \leq j \leq m.$
		The $j$-th components of $\mathcal{B} $, 
		\begin{eqnarray}
			(b_j^1, b_j^2, \ldots, b_j^m) = \bigg( \sum_{k=1}^mc_{k1}a_j^k,  \sum_{k=1}^mc_{k2}a_j^k, \ldots,  \sum_{k=1}^mc_{km}a_j^k \bigg).
		\end{eqnarray}
		Suppose that the $i$-th and the $j$-th components of $\mathcal{A}$ are equivalent.  Then  $(a_j^1, a_j^2, \ldots, \\a_j^m) = c(a_i^1, a_i^2, \ldots, a_i^m), $ for some $c \in \mathbb{R}.$ Therefore, 
		\[
		(b_j^1, b_j^2, \ldots, b_j^m) = c \bigg( \sum_{k=1}^mc_{k1}a_i^k,  \sum_{k=1}^mc_{k2}a_i^k, \ldots,  \sum_{k=1}^mc_{km}a_i^k \bigg) = c(b_i^1, b_i^2, \ldots, b_i^m ).
		\] 
		This implies that  the $i$-th and the $j$-th components of $\mathcal{B}$ are equivalent. By	a similar argument,  we can easily obtain the converse result. \\
		
		(ii) Follows immediately from  equation (1). 
	\end{proof}

	We next obtain a  characterization of best coapproximation(s) in finite-dimensional subspaces of the dual of a reflexive Banach space. This simple observation will play an important role in finding the best coapproximation in the subspaces of $\ell_1^n.$ 
	
	\begin{theorem}\label{general}
		Let $\mathbb{X}$ be a reflexive Banach space and let $ g_1, g_2, \ldots, g_m \in \mathbb{X}^*$ be linearly independent. Given any $\alpha_1, \alpha_2, \ldots, \alpha_m \in \mathbb{R},$ $\sum_{k=1}^m \alpha_kg_k$ is a best coapproximation to $ f \in \mathbb{X}^*$ out of $span\{g_1, g_2, \ldots, g_m\}$ if and only if given any $\beta_1, \beta_2, \ldots, \beta_m \in \mathbb{R},$
		\[ M_{\sum_{k=1}^m\beta_kg_k} \cap ker (f-\sum_{k=1}^m\alpha_kg_k) \neq \emptyset \]
		
	\end{theorem}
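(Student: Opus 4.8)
The plan is to translate the best coapproximation condition into Birkhoff--James orthogonality and then dualize it using the reflexivity of $\mathbb{X}$. Write $g = \sum_{k=1}^m \alpha_k g_k$, so that $\sum_{k=1}^m \alpha_k g_k$ is a best coapproximation to $f$ out of $\mathbb{Y} := \mathrm{span}\{g_1,\ldots,g_m\}$ if and only if $h \perp_B (f-g)$ for every $h \in \mathbb{Y}$; since every $h \in \mathbb{Y}$ is of the form $\sum_{k=1}^m \beta_k g_k$, this is the same as requiring $\sum_{k=1}^m \beta_k g_k \perp_B (f - \sum_{k=1}^m \alpha_k g_k)$ for all choices of $\beta_1,\ldots,\beta_m \in \mathbb{R}$. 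Thus the theorem reduces to the single clean claim: for $\phi, \psi \in \mathbb{X}^*$, one has $\phi \perp_B \psi$ if and only if $M_\phi \cap \ker \psi \neq \emptyset$, applied with $\phi = \sum_{k=1}^m \beta_k g_k$ and $\psi = f - \sum_{k=1}^m \alpha_k g_k$.

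The heart of the argument is therefore the orthogonality characterization in $\mathbb{X}^*$. First I would establish the ``if'' direction, which is elementary: if there exists $x \in M_\phi \cap \ker\psi$, then $x \in S_{\mathbb{X}}$ with $|\phi(x)| = \|\phi\|$ and $\psi(x)=0$, so for every $\lambda \in \mathbb{R}$ we have $\|\phi + \lambda\psi\| \geq |(\phi+\lambda\psi)(x)| = |\phi(x)| = \|\phi\|$, which is exactly $\phi \perp_B \psi$. The reverse implication is where reflexivity is essential and is the main obstacle. Here I would invoke the known characterization of Birkhoff--James orthogonality via supporting functionals: $\phi \perp_B \psi$ holds if and only if there is a norm-attaining element realizing the orthogonality. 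Concretely, James's characterization states that $\phi \perp_B \psi$ in a Banach space $\mathbb{X}^*$ is equivalent to the existence of some $x^{**} \in S_{\mathbb{X}^{**}}$ with $x^{**}(\phi) = \|\phi\|$ and $x^{**}(\psi) = 0$; because $\mathbb{X}$ is reflexive, $x^{**}$ is identified with an element $x \in S_{\mathbb{X}}$, giving precisely $x \in M_\phi \cap \ker\psi$.

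The main technical care goes into the reverse direction: I expect to either cite the James-type theorem that $\phi \perp_B \psi$ is equivalent to $\psi$ vanishing on some norm-attaining point of $\phi$, or to reprove it. If reproving, I would argue that $\phi \perp_B \psi$ means $\|\phi\| = \mathrm{dist}(\phi, \mathrm{span}\{\psi\}) = \inf_\lambda \|\phi + \lambda\psi\|$, and then use reflexivity together with weak-$*$ (equivalently, by reflexivity, weak) compactness of $B_{\mathbb{X}}$ and a standard separation/supporting-functional argument to extract a point $x \in S_{\mathbb{X}}$ at which $\phi$ attains its norm and $\psi$ vanishes. Reflexivity guarantees $M_\phi \neq \emptyset$ and, more importantly, that the supporting functional for the orthogonality lives in $\mathbb{X}$ rather than merely in $\mathbb{X}^{**}$, which is exactly what places the desired point in $\ker\psi \subseteq \mathbb{X}$. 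Once this orthogonality lemma is in hand, the theorem follows by simply substituting $\phi = \sum_{k=1}^m \beta_k g_k$ and $\psi = f - \sum_{k=1}^m \alpha_k g_k$ and quantifying over all $\beta_1,\ldots,\beta_m$, so the only real work is the dualized orthogonality equivalence above.
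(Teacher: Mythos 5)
Your proposal is correct and follows essentially the same route as the paper: both reduce the best coapproximation condition to the statement that $\sum_{k=1}^m \beta_k g_k \perp_B (f-\sum_{k=1}^m\alpha_k g_k)$ for all $\beta_1,\ldots,\beta_m$, and then apply the equivalence that for functionals $\phi,\psi$ over a reflexive space one has $\phi \perp_B \psi$ if and only if $M_\phi \cap \ker\psi \neq \emptyset$. The only difference is that the paper simply cites this last equivalence (as Theorem 3.2 of \cite{S}), whereas you sketch its proof via James's supporting-functional characterization combined with reflexivity, which is exactly the content of the cited result.
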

	\begin{proof}
		It follows from the definitions of Birkhoff-James orthogonality and best coapproximation that $ \sum_{k=1}^{m} \alpha_kg_k $ is a best coapproximation to $ f $ out of $ span \{ g_1, g_2, \ldots, g_m \} $ if and only if $ g \perp_B \left(f - \sum_{k=1}^{m} \alpha_kg_k\right), $ for all $ g \in span \left\lbrace  g_1, g_2, \ldots, g_m \right\rbrace. $ Clearly, this is equivalent to the following:
		
		\[ \sum_{k=1}^{m} \beta_kg_k \perp_B (f-\sum_{k=1}^{m} \alpha_kg_k) ~\forall ~\beta_1,\beta_2, \ldots,\beta_m \in \mathbb{R}. \]
		Now applying \cite[Th. $3.2$]{S}, we conclude that the above condition is equivalent to $ M_{\sum_{k=1}^m\beta_kg_k} \cap ker (f-\sum_{k=1}^m\alpha_kg_k) \neq \emptyset$. This completes the proof of the theorem.
		
	\end{proof}

	In the following section, we focus on subspaces $\mathbb{Y} $ of $\ell_1^n$ spanned by the basis $\mathcal{A}$ with $\mathcal{Z}_{\mathcal{A}} = \emptyset.$

	\section*{Section-I}

	We begin this section by noting that there exists a canonical isometric isomorphism $\psi$ between $ \ell_1^n $ and $ (\ell_\infty^n)^*,$  defined as $\psi(a_1,a_2, \ldots,a_n) =g,$ where $ g : \ell_\infty^n \to \mathbb{R}$ is given as : $ g( \alpha_1 e_1 + \alpha_2e_2 + \ldots \alpha_ne_n) = \alpha_1a_1 + \alpha_2 a_2 + \ldots \alpha_na_n, $  $ \{e_1, e_2, \ldots, e_n\}$ being the standard ordered basis of  $\mathbb{R}^n.$  Thus, given a subspace $\mathbb{{Y}} $ of $ \ell_1^n$ and an element $x \notin \mathbb{{Y}},$ the problem of finding best coapproximation to $x$ out of $\mathbb{{Y}}$ is equivalent to the problem of finding the same to $\psi(x)$ out of the subspace $\psi(\mathbb{{Y}})$ in $ (\ell_\infty^n)^*.$ This observation will be used as and when required.	We also require  the following two definitions.


	\begin{definition}
		A set $S$ in a Banach space is said to be   symmetric  if $x \in S$ implies  $-x \in S.$
	\end{definition}
	
	\begin{definition}
		Let $\mathbb{{Y}}  $ be a subspace of $ \ell_1^n.$  A symmetric set $\mathcal{N}$ is said to be 
		a norming set  of $ \mathbb{{Y}}$ if  $\left (M_g \cap Ext(B_{\ell_\infty^n}) \right)  \cap  \mathcal{N} \neq \emptyset,$ for each  $g\in \psi(\mathbb{{Y}}).$ A norming set  $\mathcal{N}$  is said to be  a minimal norming set of  $ \mathbb{{V}}$ if for some norming set $\mathcal{M}$ of $ \mathbb{{Y}},$ $\mathcal{M} \subset \mathcal{N}$ implies that  $\mathcal{M}= \mathcal{N}.$
	\end{definition}
	
	Observe that $M_g \cap Ext(B_{\ell_\infty^n}) \neq \emptyset,$ for each  $g\in \psi(\mathbb{{Y}}).$  Clearly, the minimal norming set may not be unique.	Let  $ g  \in (\ell_\infty^n)^*.$ Then for any $x= (x_1, x_2, \ldots, x_n) \in \ell_\infty^n,$ $ g(x)= \sum_{i=1}^{n} g(e_i)x_i.$ 	The following result ensures the existence of the minimal norming set of a subspace $\mathbb{{Y}} $ of $ \ell_1^n. $\\
	
	\begin{theorem}\label{norming:set}
	Let $\mathbb{{Y}}  $ be a subspace of $ \ell_1^n $  and let $ \mathcal{A} = \{\widetilde{a_1}, \widetilde{a_2}, \ldots, \widetilde{a_m}\}  $  be a basis of $\mathbb{{Y}}  $ with $\mathcal{Z}_{\mathcal{A}} = \emptyset.$  Then there exists a unique minimal norming set of $\mathbb{Y}.$ 
	\end{theorem}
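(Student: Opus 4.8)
The plan is to exhibit one explicit symmetric set $\mathcal{N}_0$ of extreme points and to show that it is contained in \emph{every} norming set of $\mathbb{Y}$ while being itself norming; this simultaneously forces $\mathcal{N}_0$ to be a minimal norming set and rules out any other, so that existence and uniqueness follow at once. First I would make the relevant norm attainment sets completely explicit. Writing a typical functional of $\psi(\mathbb{Y})$ as $g = \psi\left(\sum_{k=1}^m \alpha_k \widetilde{a_k}\right)$ and setting $\gamma_i = g(e_i) = \sum_{k=1}^m \alpha_k a_i^k$, one has $g(v) = \sum_{i=1}^n \gamma_i \epsilon_i$ for every extreme point $v = (\epsilon_1,\ldots,\epsilon_n)$ of $B_{\ell_\infty^n}$, together with $\|g\| = \sum_{i=1}^n |\gamma_i|$. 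Hence $|g(v)| = \|g\|$ forces $\epsilon_i = \operatorname{sgn}(\gamma_i)$ (up to a global sign) at every coordinate with $\gamma_i \neq 0$, giving
\[ M_g \cap Ext(B_{\ell_\infty^n}) = \big\{\, v : \epsilon_i = \operatorname{sgn}(\gamma_i) \text{ whenever } \gamma_i \neq 0 \,\big\} \cup \big\{\, -v : \text{same} \,\big\}. \]
Call $g$ \emph{nondegenerate} if $\gamma_i \neq 0$ for all $i$; then the set above is a single antipodal pair $\{v_g, -v_g\}$. I would define $\mathcal{N}_0$ to be the (finite, symmetric) collection of all $v_g$ obtained from nondegenerate $g \in \psi(\mathbb{Y})$. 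Note that nondegenerate functionals exist: the locus $\{\vec\alpha : \gamma_i = 0\}$ is the hyperplane in $\mathbb{R}^m$ orthogonal to the $i$-th component of $\mathcal{A}$, which is a \emph{proper} hyperplane precisely because $i \notin \mathcal{Z}_{\mathcal{A}}$, and since $\mathcal{Z}_{\mathcal{A}} = \emptyset$ the union of these finitely many proper hyperplanes cannot exhaust $\mathbb{R}^m$.

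Next I would prove necessity, namely $\mathcal{N}_0 \subseteq \mathcal{N}$ for \emph{any} norming set $\mathcal{N}$. For a nondegenerate $g$ we have $M_g \cap Ext(B_{\ell_\infty^n}) = \{v_g, -v_g\}$, so the norming condition $\big(M_g \cap Ext(B_{\ell_\infty^n})\big) \cap \mathcal{N} \neq \emptyset$ forces at least one of $v_g, -v_g$ into $\mathcal{N}$; since $\mathcal{N}$ is symmetric, both lie in $\mathcal{N}$. Letting $g$ range over all nondegenerate functionals yields $\mathcal{N}_0 \subseteq \mathcal{N}$.

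Finally I would prove that $\mathcal{N}_0$ is itself norming, which is the crux. Given an arbitrary $g \in \psi(\mathbb{Y})$ with support $S = \{i : \gamma_i \neq 0\}$ and complement $Z$, I must produce a nondegenerate $g'$ whose sign vector $v_{g'}$ lies in $M_g$, i.e.\ $v_{g'}$ agrees with $\operatorname{sgn}(\gamma_i)$ on $S$. The plan is a perturbation: choose $h = \psi\left(\sum_k \beta_k \widetilde{a_k}\right) \in \psi(\mathbb{Y})$ with $h(e_i) \neq 0$ for every $i \in Z$ — such $h$ exists because, as above, avoiding the finitely many proper hyperplanes orthogonal to the components indexed by $Z$ (all nonzero since $\mathcal{Z}_{\mathcal{A}} = \emptyset$) is possible — and set $g' = g + \varepsilon h$. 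For $i \in S$, continuity keeps $g'(e_i)$ of the same sign as $\gamma_i$ for small $\varepsilon > 0$, while for $i \in Z$ we get $g'(e_i) = \varepsilon\, h(e_i) \neq 0$; thus $g'$ is nondegenerate and $v_{g'} \in \mathcal{N}_0 \cap M_g \cap Ext(B_{\ell_\infty^n})$, proving $\mathcal{N}_0$ is norming. Combining the three steps, $\mathcal{N}_0$ is a norming set contained in every norming set, hence the minimum one; consequently it is a minimal norming set, and any minimal norming set $\mathcal{N}$ must contain $\mathcal{N}_0$ and therefore equal it. The main obstacle I anticipate is the sufficiency argument: one must verify carefully that the perturbation $g'$ lies in $\psi(\mathbb{Y})$ and genuinely extends the partial sign pattern of $g$, and it is here that the hypothesis $\mathcal{Z}_{\mathcal{A}} = \emptyset$ is indispensable (without it some coordinate would vanish identically on $\psi(\mathbb{Y})$ and no nondegenerate functional would exist).
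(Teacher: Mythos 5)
Your proposal is correct and follows essentially the same route as the paper: the nondegenerate functionals (all coordinates $\gamma_i\neq 0$) correspond exactly to the paper's open cells $K_j$ in coefficient space, your antipodal pairs $\{v_g,-v_g\}$ are the paper's $\{\pm\widetilde{x_j}\}$, and your perturbation $g'=g+\varepsilon h$ for degenerate $g$ is the paper's Step 3 (choosing $\widetilde{\eta}\in D_{\widetilde{\gamma}}\setminus\cup_i H_{b_i}$). The only (cosmetic) improvement is that you establish uniqueness by showing $\mathcal{N}_0$ is contained in \emph{every} norming set, i.e.\ it is the minimum norming set, which packages the paper's Steps 2 and 4 a bit more cleanly.
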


	\begin{proof}
		Suppose that $\psi(\widetilde{a_i})=g_i,$ for any $1 \leq i \leq m.$ Since $\mathcal{Z}_{\mathcal{A}}= \emptyset,$ we observe that 
			\[\bigg\{e_1, e_2, \ldots, e_n\bigg\} \bigcap \bigg(\bigcap_{j=1}^{m} Ker g_j\bigg)= \emptyset.\]
			 Any element $g$ of $\psi(\mathbb{Y})$ is of the form $ g = \sum_{k=1}^m \beta_k g_k,  $ where $( \beta_1, \beta_2, \ldots, \beta_m) \in \mathbb{R}^m.$ Moreover, we note that $g_k(e_i)=a_i^k,$ for any $ 1 \leq k \leq m, 1 \leq i \leq n.$   We will prove the theorem  in the following four  steps.\\
		
		\textbf{Step  1 : }  We express $\mathbb{R}^m$ as the union of finitely many hyperplanes and open sets which are relevant to our purpose.  For each $i = 1, 2, \ldots, n, $ we consider the hyperplane $H_i$ of $\mathbb{R}^m,$ given by
		\[ H_i = \bigg\{(\beta_1, \beta_2, \ldots, \beta_m)\in \mathbb{R}^m : \sum_{k=1}^m\beta_k g_k(e_i) = 0\bigg\}. \]
		Assume that $H_1,H_2, \ldots H_r$ are distinct hyperplanes, where $ r \leq n.$ 
		For each $i =1,2 \ldots r,$ consider the sets  $H_i^+ $ and $H_i^-$ given by 
		\[ H_i^+ = \bigg\{(\beta_1, \beta_2, \ldots, \beta_m)\in \mathbb{R}^m : \sum_{k=1}^m\beta_k g_k(e_i) > 0\bigg\}\]
		\[H_i^- = \bigg\{(\beta_1, \beta_2, \ldots, \beta_m)\in \mathbb{R}^m : \sum_{k=1}^m\beta_k g_k(e_i) < 0\bigg\}.
		\]
		Observe that   $H_i^+ \cap H_i^- = \emptyset$ and $H_i^+ \cup H_i \cup H_i^- = \mathbb{R}^m,$ for each $i=1,2,\ldots r.$  Consider the set  $K_j= H_1^{\delta_{j_1}} \cap H_2^{\delta_{j_2}} \cap \ldots \cap H_r^{\delta_{j_r}},$ where $\delta_{j_i} \in \{+,-\}$ for each $i=1,2, \ldots r.$  It is immediate that there are atmost $2^r$ number of  such  sets. We assume that   $\pm K_1, \pm K_2, \ldots, \pm K_q$ are the nonempty such sets. Then  
		\[\mathbb{R}^m =  \bigg(\cup_{i=1}^q ( K_{i}  \cup -K_{i})\bigg) \bigcup \bigg(\cup_{j=1}^r H_j\bigg) = K \cup H ,\] 
		where $ K = \cup_{i=1}^q ( K_{i}  \cup - K_{i}) $ and $ H = \cup_{j=1}^r H_j.$ \\

		\textbf{Step 2 :} We find the norm attaining set of functionals of the form $ \sum_{k=1}^m \beta_k g_k ,$ where  $ ( \beta_1, \beta_2, \ldots, \beta_n) \in  K.$ 
		We first  associate each of the nonempty  sets  $K_s (1 \leq s \leq q)$  with an extreme point of $ B_{\ell_{\infty}^n}.$ 
		For any $\widetilde{\beta}= (\beta_1, \beta_2, \ldots, \beta_m) \in K_s,$ 
		let us construct $ \widetilde{x_s} = (x_1, x_2, \ldots, x_n) \in S_{\ell_\infty^n} $ where 
		\begin{eqnarray*}
			x_t &= 1, \, & \widetilde{\beta} \in H_t^+  \\ 
			& = -1, \, & \widetilde{\beta} \in H_t^- .
		\end{eqnarray*}
		Clearly, $ \widetilde{x_s} \in Ext(B_{\ell_\infty^n}).$   Note that construction of $\widetilde{x_s}$ is independent of $\widetilde{\beta}$, for if  $\widetilde{\beta} \in (H_{i_1}^+\cap  H_{i_2}^+ \ldots \cap H_{i_s}^+) \cap (H_{j_1}^- \cap H_{j_2}^- \ldots \cap H_{j_t}^-), $ where $ 1 \leq s+t \leq r,$  then  for any $\widetilde{\omega} \in K_s,$ we have $\widetilde{\omega} \in (H_{i_1}^+\cap  H_{i_2}^+ \ldots \cap H_{i_s}^+) \cap (H_{j_1}^- \cap H_{j_2}^- \ldots \cap H_{j_t}^-). $ 
		Thus  with each $K_i$  we associate an extreme point  $\widetilde{x_i}, $ where $1 \leq i \leq q.$
		Let $\mathcal{N}=\{\pm \widetilde{x_1},\pm  \widetilde{x_2}, \ldots, \pm \widetilde{x_q} \}. $   We show that  $ \mathcal{N} = \cup_{ (\beta_1, \beta_2, \ldots, \beta_m) \in K} M_{\sum_{k=1}^m \beta_kg_k}.$  Let $ ( \beta_1, \beta_2, \ldots, \beta_m)  \in K,$  then $ ( \beta_1, \beta_2, \ldots, \beta_m)  \in K_s,$ for some $s.$  Consider $ g = \sum_{k=1}^{m} \beta_kg_k. $ Then 
		$g(\widetilde{x_s}) =   \sum_{k=1}^{m} \beta_k\\g_k(e_i)x_i > 0.$  We show that $ M_g = \{ \pm \widetilde{x_s} \}.$ 
		Let $\widetilde{y}=(y_1, y_2, \ldots, y_n) \in M_{g} \cap Ext(B_{\ell_\infty^n}).$  \\
		Therefore, 
		$  |g(\widetilde{y})|= 
		\bigg|\sum_{k=1}^m\beta_k g_k (\widetilde{y}) \bigg|= \bigg|\bigg(\sum_{k=1}^m\beta_k g_k(e_1)\bigg) \bigg|+ \bigg| \bigg(\sum_{k=1}^m\beta_k g_k(e_2)\bigg) \bigg| +\ldots + \bigg| \bigg(\sum_{k=1}^m\beta_k g_k(e_n) \bigg) \bigg|
		$ 
		which  implies 
		\begin{eqnarray*}\label{eqny}
			\bigg| \bigg(\sum_{k=1}^m\beta_k g_k(e_1) \bigg)y_1 + \bigg( \sum_{k=1}^m\beta_k g_k(e_2) \bigg)y_2 +\ldots + \bigg( \sum_{k=1}^m\beta_k g_k(e_n) \bigg)y_n \bigg|\\ = 
			\bigg|\bigg(\sum_{k=1}^m\beta_k g_k(e_1)\bigg) \bigg|+ \bigg| \bigg(\sum_{k=1}^m\beta_k g_k(e_2)\bigg) \bigg| +\ldots + \bigg| \bigg(\sum_{k=1}^m\beta_k g_k(e_n) \bigg) \bigg|.
		\end{eqnarray*}
		The last equality is satisfied  if and only if each $\bigg(\sum_{k=1}^m\beta_k g_k(e_i) \bigg)y_i $ have the same sign, which in turn is satisfied  if and only if $\widetilde{y}= \pm \widetilde{x_s}.$ Thus $ M_g = \{ \pm \widetilde{x_s} \}.$ 
		Therefore, whenever $(\alpha_1, \alpha_2, \ldots, \alpha_m) \in K_i,$ for some $1 \leq i \leq q,$ we have $M_{\sum_{k=1}^{m}  \alpha_k g_k} = \{ \pm \widetilde{x_i}\}.$  Thus $ \mathcal{N} = \cup_{ (\beta_1, \beta_2, \ldots, \beta_m) \in K} M_{\sum_{k=1}^m \beta_kg_k}.$ \\ 
		
		\textbf{Step 3 : } We deal with functionals of the form $ \sum_{k=1}^m \gamma_k g_k ,$ where $ \widetilde{\gamma} = ( \gamma_1, \gamma_2, \ldots, \\ \gamma_m) \in  H. $ Let us assume that 
		\[\widetilde{\gamma} \in H_{b_1} \cap H_{b_2} \cap \ldots \cap  H_{b_u} \cap H_{c_1}^+  \cap  H_{c_2}^+ \cap \ldots \cap H_{c_s}^+ \cap H_{d_1}^- \cap H_{d_2}^- \ldots \cap H_{d_t}^-,\]
		where  $ u +s +t =r.$ 
		
		Let us now consider the set $ D_{\widetilde{\gamma}} = (\cap_{i=1}^{s} H_{c_i}^+ ) \cap (\cap_{i=1}^{t} H_{d_i}^-).$  Now it is easy to observe that $\widetilde{\gamma} \in D_{\widetilde{\gamma}}$ and $D_{\widetilde{\gamma}}$ is an open set of $\mathbb{R}^m.$ 
		Take 
		\[\widetilde{\eta}=(\eta_1, \eta_2, \ldots, \eta_n) \in D_{\widetilde{\gamma}} \setminus (\cup_{i=1}^u H_{b_i}).\]
		Then $\sum_{k=1}^{m} \eta_k g_k(e_l) > 0,$ for any $l \in  \{ c_1, c_2, \ldots, c_s\}$ and $\sum_{k=1}^{m} \eta_k g_k(e_l) < 0,$ for any $l \in  \{ d_1, d_2, \ldots, d_t\}. $ It is easy to observe that $ \widetilde{\eta} \in K_p,$ for some $1 \leq p \leq q.$
		Therefore, $M_{\sum_{k=1}^{m} \eta_k g_k}= \{\pm \widetilde{x_p}= (x_1, x_2, \ldots, x_n)\}$  as claimed before.
		Observe that   $x_l=1,$ for any  $l \in  \{ c_1, c_2, \ldots, c_s\} $ and $x_l =-1,$ for all $ l \in \{ d_1, d_2, \ldots, d_t\}.$ 
		By a straightforward calculation it is easy to observe that $\pm \widetilde{x_p}\in M_{\sum_{k=1}^{m} \gamma_k g_k}.$\\
		
		\textbf{Step 4 : } We show that $\mathcal{N} $ is the unique minimal norming set of $\mathbb{Y}.$ From the previous two steps it follows that $\mathcal{N}$ is a norming set of $\mathbb{Y}.$ 	Let us consider a symmetric set $\mathcal{M} \subsetneq \mathcal{N}$ and also assume that $ \pm \widetilde{x_j} \in \mathcal{N} \setminus \mathcal{M}.$ It can be clearly seen that whenever $\widetilde{\beta} \in K_j,$ $\mathcal{M} \cap M_{\sum_{k=1}^{m}\beta_kg_k} = \emptyset.$ This implies that $\mathcal{N}$ is a minimal norming set of $\mathbb{Y}.$ From step 2 it follows that $\mathcal{N}$ is the unique minimal norming set of $\mathbb{Y}.$ 
		
	\end{proof}
	
	We next explore the converse of the previous result. 
	
	\begin{theorem}\label{unique}
		Let $\mathbb{{Y}}  $ be a subspace of $ \ell_1^n$  and let $ \mathcal{A} = \{  \widetilde{a_1}, \widetilde{a_2}, \ldots, \widetilde{a_m} \}  $  be a basis of $\mathbb{{Y}}.$  If the minimal norming set of $\mathbb{Y}$ is unique then  $\mathcal{Z}_{\mathcal{A}} = \emptyset.$
	\end{theorem}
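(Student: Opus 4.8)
The plan is to prove the contrapositive: assuming $\mathcal{Z}_{\mathcal{A}} \neq \emptyset$, I will produce two distinct minimal norming sets of $\mathbb{Y}$, so the minimal norming set cannot be unique. Fix an index $i_0 \in \mathcal{Z}_{\mathcal{A}}$. Since $\mathbb{Y} \neq \{\theta\}$ (it carries a basis) while the $i_0$-th component vanishes, some other component must be nonzero, forcing $n \geq 2$; this will be needed later. Writing $g_k = \psi(\widetilde{a_k})$, the key structural fact is $g_k(e_{i_0}) = a_{i_0}^k = 0$ for every $k$, so that $g(e_{i_0}) = 0$ for every $g \in \psi(\mathbb{Y})$.

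The main device is the coordinate flip $\sigma : \ell_\infty^n \to \ell_\infty^n$ that reverses the sign of the $i_0$-th coordinate and fixes the others. First I would record that $\sigma$ is a linear isometry, is an involution, maps $Ext(B_{\ell_\infty^n})$ bijectively onto itself, and commutes with negation (hence sends symmetric sets to symmetric sets). Because $g(e_{i_0}) = 0$, a one-line computation gives $g(\sigma x) = g(x)$ for all $g \in \psi(\mathbb{Y})$ and all $x$, whence $\sigma(M_g) = M_g$. Combining these, for every $g$ one has $M_g \cap Ext(B_{\ell_\infty^n}) \cap \sigma(\mathcal{N}) = \sigma\big(M_g \cap Ext(B_{\ell_\infty^n}) \cap \mathcal{N}\big)$, so $\sigma$ carries norming sets to norming sets; being an inclusion-preserving involution, it also carries minimal norming sets to minimal norming sets. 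A minimal norming set exists at all because the finite set $Ext(B_{\ell_\infty^n})$ is itself a norming set (by the remark preceding Theorem \ref{norming:set}) and one may delete symmetric pairs until minimality.

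The crux — and the step I expect to be the main obstacle — is to show that no minimal norming set $\mathcal{N}$ is fixed by $\sigma$. I plan to argue by contradiction via a deletion argument. Assuming $\sigma(\mathcal{N}) = \mathcal{N}$, pick any extreme point $\widetilde{x} \in \mathcal{N}$ and set $\mathcal{N}' = \mathcal{N} \setminus \{\widetilde{x}, -\widetilde{x}\}$. To see that $\mathcal{N}'$ is still norming, fix $g \in \psi(\mathbb{Y})$ and a witness $\widetilde{z} \in M_g \cap Ext(B_{\ell_\infty^n}) \cap \mathcal{N}$. If $\widetilde{z} \neq \pm\widetilde{x}$ then $\widetilde{z} \in \mathcal{N}'$; otherwise $\widetilde{x} \in M_g$, so $\sigma\widetilde{x} \in \sigma(M_g) = M_g$ and $\sigma\widetilde{x} \in \sigma(\mathcal{N}) = \mathcal{N}$, while $\sigma\widetilde{x} \neq \pm\widetilde{x}$ because flipping a single coordinate of a $\pm 1$-vector yields neither $\widetilde{x}$ nor $-\widetilde{x}$ once $n \geq 2$; hence $\sigma\widetilde{x}$ is a witness surviving in $\mathcal{N}'$. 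Either way $\mathcal{N}'$ meets $M_g \cap Ext(B_{\ell_\infty^n})$, so $\mathcal{N}' \subsetneq \mathcal{N}$ is norming, contradicting minimality. The delicate points are precisely the inequality $\sigma\widetilde{x} \neq \pm\widetilde{x}$ (where both $n \geq 2$ and $i_0 \in \mathcal{Z}_{\mathcal{A}}$ enter) and checking that a witness survives for every $g$ after the deletion.

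Finally I would assemble the pieces: take any minimal norming set $\mathcal{N}$; then $\sigma(\mathcal{N})$ is also a minimal norming set by the second paragraph, and $\sigma(\mathcal{N}) \neq \mathcal{N}$ by the third. Hence $\mathbb{Y}$ admits at least two distinct minimal norming sets, so the minimal norming set is not unique. This establishes the contrapositive and proves the theorem.
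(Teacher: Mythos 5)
Your argument is correct and rests on exactly the paper's key observation: for $i_0 \in \mathcal{Z}_{\mathcal{A}}$, flipping the $i_0$-th coordinate leaves $g(\cdot)$ unchanged for every $g \in \psi(\mathbb{Y})$, hence preserves membership in each $M_g$ and in $Ext(B_{\ell_\infty^n})$. The paper packages this more locally---it replaces the single pair $\pm\widetilde{x_1}$ of a minimal norming set by its flipped version $\pm\widetilde{y_1}$ and notes the result is a different minimal norming set---whereas you apply the flip globally and add a deletion argument to rule out $\sigma(\mathcal{N})=\mathcal{N}$; your extra steps are sound (and make the minimality of the second set more explicit than the paper does), but the route is essentially the same.
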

	
	\begin{proof}
		Let us assume that the minimal norming set $\mathcal{N}$ of $\mathbb{Y}$ is unique. 
		Suppose on the contrary that $j \in \mathcal{Z}_{\mathcal{A}},$ for some $ 1 \leq j \leq n.$ This  implies that $g(e_j)=0,$ for any $g \in \psi(\mathbb{Y}).$ Suppose that $\mathcal{N}= \{ \pm \widetilde{x_1}, \pm \widetilde{x_2}, \ldots, \pm \widetilde{x_q} \}$ is a minimal norming set of $\mathbb{Y},$ where $\widetilde{x_k}= ( x_1^k, x_2^k, \ldots, x_n^k),$ for $1 \leq k \leq q.$ Let us now consider 
		\[\widetilde{y_1} = (x_1^1, x_2^1,  \ldots, x_{j-1}^1, -x_j^1, x_{j+1}^1, \ldots, x_n^1)\in Ext(B_{\ell_\infty^n}).\]
		It can be easily observed that for any $g \in \psi(\mathbb{Y}),$ $\widetilde{y_1} \in M_g$ if and only if $\widetilde{x_1} \in M_g.$ 
		Therefore, $\mathcal{N}_1= \{\pm \widetilde{y_1},  \pm \widetilde{x_2}, \ldots, \pm \widetilde{x_q} \}$ is a norming set of $\mathbb{Y}.$ Since $\widetilde{y_1} \notin \{\pm \widetilde{x_2}, \ldots, \pm \widetilde{x_q} \},$ $\mathcal{N}_1 (\neq \mathcal{N})$ is a minimal norming set of $\mathbb{Y}.$ This contradicts the assumption that the minimal norming set of $ \mathbb{Y}$ is unique.
	\end{proof}

	Now we are in a position to present the characterization of the best coapproximation in  $ \mathbb{{Y}}.$ This is given in terms of a system of linear equations which clearly illustrates its computational effectiveness.
	
	\begin{theorem}\label{characterization}
	Let $\mathbb{{Y}}  $ be a subspace of $ \ell_1^n $  and let $ \mathcal{A} = \{  \widetilde{a_1}, \widetilde{a_2}, \ldots, \widetilde{a_m} \}  $  be a basis of $\mathbb{{Y}}  $ with $\mathcal{Z}_{\mathcal{A}} = \emptyset.$ 
		Suppose that $ \mathcal{N} = \{ \pm \widetilde{x_1}, \pm \widetilde{x_2},\ldots, \pm \widetilde{x_q} \}$ is  the minimal norming set of $\mathbb{{Y}},$ where $\widetilde{x_k} = (x_1^k, x_2^k,  \ldots, x_n^k),$ for any $1\leq k\leq q.$ Then given  $ \widetilde{b}=(b_1, b_2, \ldots, b_n) \in \ell_1^n,$ $\sum_{k=1}^m\alpha_k \widetilde{a_k} $ is a best coapproximation to $\widetilde{b}$ out of $\mathbb{{Y}}$ if and only if $\alpha_1, \alpha_2, \ldots, \alpha_m \in \mathbb{R}$ satisfy the following relations:\\
		\begin{eqnarray*}
			\alpha_1\sum_{i=1}^n a_i^1 x_i^p + \alpha_2\sum_{i=1}^n a_i^2 x_i^p + \ldots + \alpha_m\sum_{i=1}^n a_i^m x_i^p = \sum_{i=1}^n b_i x_i^p,
		\end{eqnarray*}
		for any $p \in \{ 1, 2, \ldots, q\}.$ 
	\end{theorem}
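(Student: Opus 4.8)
The plan is to transfer the problem to the dual space $(\ell_\infty^n)^*$ via the canonical isometric isomorphism $\psi$ and then invoke Theorem \ref{general}, whose hypotheses are satisfied because $\ell_\infty^n$ is reflexive. Writing $g_k = \psi(\widetilde{a_k})$ and $f = \psi(\widetilde{b})$, the element $\sum_{k=1}^m \alpha_k \widetilde{a_k}$ is a best coapproximation to $\widetilde{b}$ out of $\mathbb{Y}$ if and only if $\sum_{k=1}^m \alpha_k g_k$ is a best coapproximation to $f$ out of $\mathrm{span}\{g_1, \ldots, g_m\}$. By Theorem \ref{general}, this is equivalent to
\[
M_{\sum_{k=1}^m \beta_k g_k} \cap \ker\Big(f - \sum_{k=1}^m \alpha_k g_k\Big) \neq \emptyset \quad \text{for every } (\beta_1, \ldots, \beta_m) \in \mathbb{R}^m.
\]
Setting $h = f - \sum_{k=1}^m \alpha_k g_k = \psi\big(\widetilde{b} - \sum_{k=1}^m \alpha_k \widetilde{a_k}\big)$, the task reduces to showing that this ``for all $\beta$'' condition is equivalent to $\widetilde{x_p} \in \ker h$ for every $p \in \{1, \ldots, q\}$. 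Using $g(x) = \sum_i g(e_i)x_i$ together with $g_k(e_i) = a_i^k$ and $f(e_i) = b_i$, one computes
\[
h(\widetilde{x_p}) = \sum_{i=1}^n b_i x_i^p - \sum_{k=1}^m \alpha_k \sum_{i=1}^n a_i^k x_i^p,
\]
so that $\widetilde{x_p} \in \ker h$ unravels precisely into the stated linear relations.

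For the direction that the orthogonality condition forces the equations, I would, for each fixed $p$, choose $\widetilde{\beta} \in K_p$, the open region constructed in Step~2 of the proof of Theorem \ref{norming:set}. For such $\widetilde{\beta}$ that proof shows $M_{\sum_{k=1}^m \beta_k g_k}$ equals exactly the symmetric pair $\{\pm \widetilde{x_p}\}$ and nothing more. The hypothesis then gives $\{\pm \widetilde{x_p}\} \cap \ker h \neq \emptyset$, and since $\ker h$ is a linear subspace this forces $\widetilde{x_p} \in \ker h$. Letting $p$ range over $\{1, \ldots, q\}$ yields all the required equations.

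Conversely, assuming $\widetilde{x_p} \in \ker h$ for every $p$, I would verify the orthogonality condition for an arbitrary $(\beta_1, \ldots, \beta_m)$ directly from the fact that $\mathcal{N} = \{\pm \widetilde{x_1}, \ldots, \pm \widetilde{x_q}\}$ is a norming set of $\mathbb{Y}$: by definition $\big(M_{\sum_{k=1}^m \beta_k g_k} \cap Ext(B_{\ell_\infty^n})\big) \cap \mathcal{N} \neq \emptyset$, so $M_{\sum_{k=1}^m \beta_k g_k}$ contains some $\pm \widetilde{x_p}$, which lies in $\ker h$ by assumption; hence the intersection with $\ker h$ is nonempty. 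This direction is the easy one, since it uses only the defining norming property and not the finer regional decomposition.

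The main obstacle is the forward direction, and specifically the legitimate use of the \emph{exact} identity $M_{\sum_{k=1}^m \beta_k g_k} = \{\pm \widetilde{x_p}\}$ for $\widetilde{\beta} \in K_p$, rather than a mere inclusion: without this sharpness one could only conclude that $\ker h$ meets a possibly larger norm attainment set, which would not pin down $\widetilde{x_p}$ itself. It is precisely the collapse of the norm attainment set to a single symmetric pair on each $K_p$, established in Theorem \ref{norming:set}, that makes each $\widetilde{x_p}$ individually detectable and thereby converts the infinite family of orthogonality conditions into the finite system of $q$ linear equations.
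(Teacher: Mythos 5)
Your proposal is correct and follows essentially the same route as the paper: transfer to $(\ell_\infty^n)^*$ via $\psi$, apply Theorem \ref{general}, use the exact identity $M_{\sum_{k}\beta_k g_k}=\{\pm\widetilde{x_p}\}$ for $\widetilde{\beta}\in K_p$ (from the construction in Theorem \ref{norming:set}) to extract each equation in the forward direction, and use the norming-set property for the converse. Your explicit remark that $\ker h$ being a subspace lets one pass from $\{\pm\widetilde{x_p}\}\cap\ker h\neq\emptyset$ to $\widetilde{x_p}\in\ker h$ is a small detail the paper leaves implicit, but the argument is the same.
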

	
	\begin{proof}
		Suppose that $\psi(\widetilde{a_i})=g_i,$ for any $ 1 \leq i \leq m$ and  $ \psi(b)=f.$  We observe that $\sum_{k=1}^{m} \alpha_k \widetilde{a_k}$ being a best coapproximation to $\widetilde{b}$ out of $\mathbb{{Y}}$ is equivalent to $\sum_{k=1}^m\alpha_k g_k $ being a best coapproximation to $f$ out of $\psi(\mathbb{{Y}}).$
		
		\smallskip
		
		Let us first prove the necessary part of the theorem. Since
		$\{ \pm \widetilde{x_1}, \pm \widetilde{x_2},\ldots, \pm \widetilde{x_q} \} $ is the minimal norming set of $\mathbb{Y},$ it can be easily observed that  for any $\widetilde{x_s},$ there exists $\widetilde{\beta}=(\beta_1, \beta_2, \ldots, \beta_m)\in \mathbb{R}^m$ such that 
		$ M_{\sum_{k=1}^{m} \beta_k g_k} = \{\pm \widetilde{x_s}\}.$
 It follows from Theorem \ref{general} that $M_{\sum_{k=1}^m\beta_kg_k} \cap ker (f - \sum_{k=1}^m\alpha_kg_k) \neq \emptyset.$ 
		Therefore, $\widetilde{x_s} \in ker(f - \sum_{k=1}^m\alpha_kg_k),$ i.e., $(f - \sum_{k=1}^m\alpha_kg_k)\widetilde{x_s} = 0, $ which implies
		\[ f(\widetilde{x_s}) = \alpha_1g_1(\widetilde{x_s}) + \alpha_2g_2(\widetilde{x_s}) + \ldots + \alpha_mg_m(\widetilde{x_s}).\]
		This is equivalent to
		\[
		\alpha_1\sum_{i=1}^n g_1(e_i) x_i^s + \alpha_2\sum_{i=1}^n g_2(e_i)x_i^s + \ldots + \alpha_m\sum_{i=1}^n g_m(e_i)x_i^s = \sum_{i=1}^n f(e_i)x_i^s.\]
		Similarly, we can observe that for all $p \in \{1, 2, \ldots, q\},$
		\[ 	\alpha_1\sum_{i=1}^n g_1(e_i) x_i^p + \alpha_2\sum_{i=1}^n g_2(e_i)x_i^p + \ldots + \alpha_m\sum_{i=1}^n g_m(e_i)x_i^p = \sum_{i=1}^n f(e_i)x_i^p,
		\]
		which implies, 
			\begin{eqnarray*}
			\alpha_1\sum_{i=1}^n a_i^1 x_i^p + \alpha_2\sum_{i=1}^n a_i^2 x_i^p + \ldots + \alpha_m\sum_{i=1}^n a_i^m x_i^p = \sum_{i=1}^n b_i x_i^p,
		\end{eqnarray*}
		for any $p \in \{ 1, 2, \ldots, q\}.$
		This completes the necessary part of the theorem.
		
		\smallskip

		We now prove the sufficient part of the theorem. From the hypothesis, we obtain that $\alpha_1, \alpha_2, \ldots, \alpha_m\in \mathbb{R}$ satisfy the following relations:
		\begin{eqnarray}\label{x_t}
			\alpha_1\sum_{i=1}^n g_1(e_i)x_i^t + \alpha_2\sum_{i=1}^n g_2(e_i)x_i^t + \ldots + \alpha_1\sum_{i=1}^n g_m(e_i)x_i^t = \sum_{i=1}^n f(e_i)x_i^t,
		\end{eqnarray}
		for any $t \in \{ 1, 2, \ldots , q\}.$ 
		Now \[ (f - \sum_{k=1}^m\alpha_kg_k)\widetilde{x_t} = f(\widetilde{x_t})- \bigg\{ \alpha_1g_1(\widetilde{x_t}) + \alpha_2g_2(\widetilde{x_t}) +\ldots + \alpha_mg_m(\widetilde{x_t})   \bigg\}. \]
		Therefore, using equation (\ref{x_t}), it is immediate that  $\widetilde{x_t} \in ker(f - \sum_{k=1}^m\alpha_kg_k),$ for any $t \in \{1, 2, \ldots, q\}.$ 
		For any $\beta_1, \beta_2, \ldots, \beta_m \in \mathbb{R},$ not all zero,  there exists $ s \in \{ 1, 2, \ldots, q\}$ such that $\widetilde{x_s} \in M_{\sum_{k=1}^m\beta_kg_k}.$    Therefore, \[\widetilde{x_s} \in ker(f - \sum_{k=1}^m\alpha_kg_k) \cap  M_{\sum_{k=1}^m\beta_kg_k}. \]
		Therefore, from Theorem \ref{general}, the sufficient part of the theorem follows directly. 
		
	\end{proof}
	
	Combining Theorem \ref{characterization} with  the theoretical characterization of best coapproximation in terms of norm one projections, as given in \cite{LT, PS}, we get the following result.

	\begin{cor}
			Let $\mathbb{{Y}}  $ be a subspace of $ \ell_1^n $  and let $ \mathcal{A} = \{  \widetilde{a_1}, \widetilde{a_2}, \ldots, \widetilde{a_m} \}  $  be a basis of $\mathbb{{Y}}  $ with $\mathcal{Z}_{\mathcal{A}} = \emptyset.$ 
		Suppose that $ \mathcal{N} = \{ \pm \widetilde{x_1}, \pm \widetilde{x_2},\ldots, \pm \widetilde{x_q} \}$ is  the minimal norming set of $\mathbb{{Y}},$ where $\widetilde{x_k} = (x_1^k, x_2^k,  \ldots, x_n^k),$ for any $1\leq k\leq q.$ Then given $ \widetilde{b}=(b_1, b_2, \ldots, b_n) \in \ell_1^n,$ there exists a norm one projection from $span\{  \widetilde{b}, \mathbb{{Y}} \}$ to $\mathbb{{Y}}$ if and only if there exist 
		 $\alpha_1, \alpha_2, \ldots, \alpha_m \in \mathbb{R}$ satisfy the following relations:
		\begin{eqnarray*}
			\alpha_1\sum_{i=1}^n a_i^1 x_i^p + \alpha_2\sum_{i=1}^n a_i^2 x_i^p + \ldots + \alpha_m\sum_{i=1}^n a_i^m x_i^p = \sum_{i=1}^n b_i x_i^p,
		\end{eqnarray*}
		for any $p \in \{ 1, 2, \ldots, q\}.$ Moreover, if $\alpha_1, \alpha_2, \ldots, \alpha_m \in \mathbb{R}$ satisfy the above system of linear equations then  $P(\widetilde{a}+\gamma \widetilde{b}) = \widetilde{a}+ \gamma(\sum_{i=1}^{m} \alpha_i \widetilde{a_i})$ is the norm $1$ projection, for any $\widetilde{a} \in \mathbb{{Y}}.$
	\end{cor}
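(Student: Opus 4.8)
The plan is simply to assemble the two ingredients already at hand: the theoretical equivalence between best coapproximations and norm one projections recorded from \cite{LT, PS}, and the explicit linear system of Theorem \ref{characterization}. For the \emph{if and only if}, I would argue that the existence of a norm one projection from $span\{\widetilde{b}, \mathbb{Y}\}$ to $\mathbb{Y}$ is, by \cite{LT, PS}, precisely the existence of a best coapproximation to $\widetilde{b}$ out of $\mathbb{Y}$; and by Theorem \ref{characterization} the latter is in turn equivalent to the solvability of the displayed linear system in $\alpha_1, \ldots, \alpha_m$. Chaining these two equivalences yields the assertion. The degenerate case $\widetilde{b} \in \mathbb{Y}$ is immediate: the identity is a norm one projection and the coordinates of $\widetilde{b}$ solve the system, so nothing is lost.

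For the \emph{moreover} clause, suppose $\alpha_1, \ldots, \alpha_m$ solve the system and set $y_0 = \sum_{i=1}^m \alpha_i \widetilde{a_i}$. By Theorem \ref{characterization}, $y_0$ is a best coapproximation to $\widetilde{b}$, which by the orthogonality reformulation recorded in the introduction means $\mathbb{Y} \perp_B (\widetilde{b} - y_0)$, i.e. $y \perp_B (\widetilde{b} - y_0)$ for every $y \in \mathbb{Y}$. Assuming $\widetilde{b} \notin \mathbb{Y}$ (otherwise $P$ is the identity), every element of $span\{\widetilde{b}, \mathbb{Y}\}$ has a unique representation $\widetilde{a} + \gamma \widetilde{b}$ with $\widetilde{a} \in \mathbb{Y}$ and $\gamma \in \mathbb{R}$, so the formula $P(\widetilde{a} + \gamma \widetilde{b}) = \widetilde{a} + \gamma y_0$ defines a linear map. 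It fixes $\mathbb{Y}$ pointwise and has range $\mathbb{Y}$, hence $P^2 = P$ is a projection onto $\mathbb{Y}$.

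It then remains to check $\|P\| = 1$. Writing $z = \widetilde{a} + \gamma \widetilde{b}$ one has $P(z) = z - \gamma(\widetilde{b} - y_0)$ with $P(z) \in \mathbb{Y}$; applying the Birkhoff-James orthogonality $P(z) \perp_B (\widetilde{b} - y_0)$ with scalar $\gamma$ gives
\[
\|P(z)\| \leq \|P(z) + \gamma(\widetilde{b} - y_0)\| = \|z\|,
\]
so $\|P\| \leq 1$, and equality holds because $P$ restricts to the identity on $\mathbb{Y} \neq \{\theta\}$. Thus $P$ is the desired norm one projection.

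I would expect no serious obstacle, since the result is essentially a packaging of Theorem \ref{characterization} together with the known equivalence from \cite{LT, PS}; the only point requiring any care is the final norm estimate, and that collapses to the one-line Birkhoff-James inequality above once $P(z)$ is recognized as lying in $\mathbb{Y}$.
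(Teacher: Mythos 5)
Your proposal is correct and follows essentially the same route as the paper, which states this corollary without a written proof beyond the remark that it follows by combining Theorem \ref{characterization} with the norm-one-projection characterization from \cite{LT, PS}. Your explicit verification of the \emph{moreover} clause — checking that $P(\widetilde{a}+\gamma\widetilde{b})=\widetilde{a}+\gamma y_0$ is well defined, idempotent, and of norm one via the Birkhoff--James inequality applied to $P(z)\in\mathbb{Y}$ — is exactly the intended (and omitted) argument, and it is sound.
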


	We next obtain an immediate corollary from Theorem \ref{characterization}, which guarantees the uniqueness of best coapproximation to a element in $\ell_1^n$ out of a subspace of $\ell_1^n,$ provided it exists.
	
	\begin{cor}\label{cop}
		Let $\mathbb{{Y}}  $ be a subspace of $ \ell_1^n $  and let $ \mathcal{A} = \{  \widetilde{a_1}, \widetilde{a_2}, \ldots, \widetilde{a_m} \}  $  be a basis of $\mathbb{{Y}}  $ with $\mathcal{Z}_{\mathcal{A}} = \emptyset.$   For any given $f \in \ell_1^n,$ if there exists a best coapproximation to $f$ out of $\mathbb{{Y}}$ then it is unique.
	\end{cor}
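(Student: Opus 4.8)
The plan is to deduce uniqueness directly from the linear system furnished by Theorem \ref{characterization}. Suppose $\sum_{k=1}^m \alpha_k \widetilde{a_k}$ and $\sum_{k=1}^m \alpha_k' \widetilde{a_k}$ are both best coapproximations to $f$ out of $\mathbb{Y}$, and write $f = (b_1, b_2, \ldots, b_n).$ By Theorem \ref{characterization}, both coefficient tuples $(\alpha_1, \ldots, \alpha_m)$ and $(\alpha_1', \ldots, \alpha_m')$ satisfy the same system of $q$ linear equations whose right-hand side is $\sum_{i=1}^n b_i x_i^p.$ Subtracting the two systems and writing $\gamma_k = \alpha_k - \alpha_k',$ it suffices to show that the homogeneous system
\[
\sum_{k=1}^m \gamma_k \Big( \sum_{i=1}^n a_i^k x_i^p \Big) = 0, \quad p \in \{1, 2, \ldots, q\},
\]
forces $\gamma_1 = \gamma_2 = \cdots = \gamma_m = 0.$

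Next I would package the homogeneous relations into a single vector. Set $\widetilde{c} = \sum_{k=1}^m \gamma_k \widetilde{a_k} = (c_1, c_2, \ldots, c_n) \in \ell_1^n,$ so that $c_i = \sum_{k=1}^m \gamma_k a_i^k,$ and let $g = \psi(\widetilde{c}) \in \psi(\mathbb{Y}).$ With this notation, the homogeneous equations read $g(\widetilde{x_p}) = \sum_{i=1}^n c_i x_i^p = 0$ for every $p \in \{1, 2, \ldots, q\};$ that is, $g$ annihilates every element of the minimal norming set $\mathcal{N} = \{\pm \widetilde{x_1}, \pm \widetilde{x_2}, \ldots, \pm \widetilde{x_q}\}.$

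The crucial step is then to invoke the norming property of $\mathcal{N}.$ Because $\mathcal{N}$ is a norming set of $\mathbb{Y},$ there exists an index $p$ with $\widetilde{x_p} \in M_g \cap Ext(B_{\ell_\infty^n}),$ and for such $p$ one has $|g(\widetilde{x_p})| = \|g\|.$ Since $\psi$ is an isometric isomorphism between $\ell_1^n$ and $(\ell_\infty^n)^*,$ we have $\|g\| = \|\widetilde{c}\| = \sum_{i=1}^n |c_i|.$ Combining this with the identity $g(\widetilde{x_p}) = 0$ established in the previous step yields $\sum_{i=1}^n |c_i| = 0,$ hence $\widetilde{c} = \theta.$ Finally, the linear independence of the basis $\{\widetilde{a_1}, \widetilde{a_2}, \ldots, \widetilde{a_m}\}$ forces $\gamma_1 = \gamma_2 = \cdots = \gamma_m = 0,$ so $\alpha_k = \alpha_k'$ for every $k,$ establishing uniqueness.

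I expect the main (and essentially the only) subtlety to lie in the crucial step: correctly reading off from the definition of a norming set that some extreme point of $B_{\ell_\infty^n}$ in $\mathcal{N}$ attains the norm of $g,$ and then identifying that norm with $\|\widetilde{c}\|$ through the isometry $\psi.$ Everything else is a routine reduction from Theorem \ref{characterization} together with the linear independence of $\mathcal{A}.$ Note that the hypothesis $\mathcal{Z}_{\mathcal{A}} = \emptyset$ enters only implicitly, by guaranteeing (via Theorem \ref{norming:set}) that a well-defined minimal norming set $\mathcal{N}$ is available to drive the argument.
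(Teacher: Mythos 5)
Your proposal is correct and follows essentially the same route as the paper: both reduce uniqueness, via Theorem \ref{characterization}, to showing that the associated homogeneous linear system forces all the coefficient differences to vanish. The only (minor) difference is the finishing move --- the paper re-applies Theorem \ref{characterization} to recognize $\sum_{k}(\alpha_k-\gamma_k)\widetilde{a_k}$ as a best coapproximation to $\theta$ out of $\mathbb{Y}$ (hence equal to $\theta$ by taking $y=\theta$ in the definition), whereas you argue directly that an element of $\psi(\mathbb{Y})$ annihilating the norming set $\mathcal{N}$ must have norm zero; both steps are immediate and equally valid.
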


	\begin{proof} 	Let $\{ \pm \widetilde{x_1}, \pm \widetilde{x_2},\ldots, \pm \widetilde{x_q} \}$ be the minimal  norming set of $\mathbb{{Y}},$ where $\widetilde{x_k} = (x_1^k, x_2^k, $ $ \ldots, x_n^k),$ for each $1\leq k\leq q.$
		Suppose on the contrary, $\sum_{k=1}^m\alpha_k \widetilde{a_k}$ and $\sum_{k=1}^m\gamma_k \widetilde{a_k}$ are  two distinct best coapproximations to $f$ out of $\mathbb{Y}.$ Therefore, from Theorem $\ref{characterization},$ $ \widetilde{\alpha}=(\alpha_1, \alpha_2, \ldots, \alpha_m) \in \mathbb{R}^m$ and $\widetilde{ \gamma} = (\gamma_1, \gamma_2, \ldots, \gamma_m) \in \mathbb{R}^m$ such that $ \alpha_i \neq \gamma_i$, for some $i \in \{1, 2, \ldots, m\}$ satisfies the following relations :\\
		\begin{eqnarray*}
			\alpha_1\sum_{i=1}^n a_i^1 x_i^p + \alpha_2\sum_{i=1}^n a_i^2 x_i^p + \ldots + \alpha_m\sum_{i=1}^n a_i^m x_i^p = \sum_{i=1}^n b_i x_i^p
		\end{eqnarray*}
		and
		\begin{eqnarray*}
			\gamma_1\sum_{i=1}^n a_i^1 x_i^p + \gamma_2\sum_{i=1}^n a_i^2 x_i^p + \ldots + \gamma_m\sum_{i=1}^n a_i^m x_i^p = \sum_{i=1}^n b_i x_i^p.
		\end{eqnarray*}
		for every $p \in \{1, 2, \ldots, q\}.$
		It is immediate from the above two equations that
		\begin{eqnarray*}
			(\alpha_1-\gamma_1)\sum_{i=1}^n a_i^1 x_i^p + (\alpha_2 - \gamma_2)\sum_{i=1}^n a_i^2 x_i^p + \ldots + (\alpha_m - \gamma_m)\sum_{i=1}^n a_i^m x_i^p = 0, 
		\end{eqnarray*}
		for all $p \in \{1, 2, \ldots, q\}.$
		Again using Theorem \ref{characterization}, we conclude that $\sum_{k=1}^m(\alpha_k - \gamma_k)\widetilde{a_k}$ is a best coapproximation to $\theta \in (\ell_{\infty}^n)^*$ out of $\mathbb{Y}.$ Therefore, $\alpha_k = \gamma_k,$ for all $k \in \{1, 2, \ldots, m\}.$ This contradiction completes the proof.
	\end{proof}

	Following   Corollary \ref{cop}, it is immediate that any coproximinal subspace $ \mathbb{{Y}}$ of $ \ell_1^n$ is also a co-Chebyshev subspace. We are now going to characterize the coproximinal(co-Chebyshev) subspaces with the help of Theorem \ref{characterization}. We first prove the following proposition.

		\begin{prop}\label{number}
		Let $\mathbb{{Y}}  $ be a subspace of $ \ell_1^n $  and let $ \mathcal{A} = \{  \widetilde{a_1}, \widetilde{a_2}, \ldots, \widetilde{a_m} \}  $  be a basis of $\mathbb{{Y}}  $ with $\mathcal{Z}_{\mathcal{A}} = \emptyset.$  Suppose that there are $ d$ number of nonequivalent components. Then there exist at most $d $ number of linearly independent elements in the minimal  norming set of $ \mathbb{Y}.$ 
	\end{prop}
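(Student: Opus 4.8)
The plan is to exploit the explicit construction of the minimal norming set $\mathcal{N}$ given in the proof of Theorem \ref{norming:set}, and to show that the equivalence relation on the components forces every element of $\mathcal{N}$ to lie in a single subspace of dimension at most $d$.

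First I would record the precise effect of the construction. Writing $g_k(e_i) = a_i^k$, each norming vector $\widetilde{x_s} = (x_1, \ldots, x_n)$ arises from some $\widetilde{\beta} = (\beta_1, \ldots, \beta_m)$ in an open region $K_s$ via $x_i = \operatorname{sign}\left(\sum_{k=1}^m \beta_k a_i^k\right)$, and this expression is exactly $\widetilde{\beta}$ paired with the $i$-th component $(a_i^1, \ldots, a_i^m)$. The crucial observation is that if the $i$-th and $j$-th components are equivalent, say $(a_i^1, \ldots, a_i^m) = c\,(a_j^1, \ldots, a_j^m)$ with $c \neq 0$, then $\sum_{k=1}^m \beta_k a_i^k = c \sum_{k=1}^m \beta_k a_j^k$ for every $\widetilde{\beta}$, and hence $x_i = \operatorname{sign}(c)\, x_j$. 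Thus within each equivalence class of components the corresponding coordinates of every $\widetilde{x_s}$ (and of $-\widetilde{x_s}$) are rigidly coupled: they are all $\pm$ a single $\pm 1$ value, with the signs fixed independently of $s$.

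Next I would convert this sign-coupling into a dimension bound. Let $C_1, \ldots, C_d$ be the equivalence classes of components and choose a representative index $i_t \in C_t$ for each $t$. For $j \in C_t$, write the $j$-th component as $c_{jt}$ times the $i_t$-th component, and set $\varepsilon_{jt} = \operatorname{sign}(c_{jt})$. Define the linear map $L : \mathbb{R}^d \to \mathbb{R}^n$ by $(Lz)_j = \varepsilon_{jt}\, z_t$ for $j \in C_t$ (so that $(Lz)_{i_t} = z_t$). By the previous paragraph, every $\widetilde{x} \in \mathcal{N}$ satisfies $\widetilde{x} = L\big(x_{i_1}, \ldots, x_{i_d}\big)$, that is, $\mathcal{N} \subseteq \operatorname{Im} L$. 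Since $L$ is defined on $\mathbb{R}^d$, its image has dimension at most $d$, and therefore $\operatorname{span} \mathcal{N} \subseteq \operatorname{Im} L$ has dimension at most $d$. This gives at most $d$ linearly independent elements in $\mathcal{N}$.

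The substantive step is the structural observation in the second paragraph: recognizing that equivalent components are precisely the ones defining the same hyperplane $H_i$ and therefore impose the same sign condition (up to an overall $\operatorname{sign}(c)$) on every norming vector. Once this coupling is in place, the dimension count is routine linear algebra. I do not expect any difficulty beyond verifying that the relation $x_i = \operatorname{sign}(c)\, x_j$ holds uniformly across all of $\mathcal{N}$, which follows because every element of $\mathcal{N}$ is of the form $\pm \widetilde{x_s}$ produced by the construction of Theorem \ref{norming:set}.
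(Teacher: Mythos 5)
Your proposal is correct and follows essentially the same route as the paper: both rest on the observation that equivalent components force the corresponding coordinates of every norming vector to agree up to a fixed sign (since $x_i = \operatorname{sign}(\sum_k \beta_k a_i^k)$ and equivalence rescales that sum by a nonzero constant), and both then conclude by a dimension count — the paper by bounding the rank of the matrix whose rows are the $\widetilde{x_k}$ (at most $d$ distinct columns up to sign), you by exhibiting $\mathcal{N}$ inside the image of a linear map from $\mathbb{R}^d$. The two formulations are interchangeable, and your write-up of the sign-coupling step is, if anything, slightly more explicit than the paper's.
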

	
	\begin{proof}
		Let $\{ \pm \widetilde{x_1}, \pm \widetilde{x_2},\ldots, \pm \widetilde{x_q} \}$ be the minimal norming set of $\mathbb{{Y}},$ where $\widetilde{x_k} = (x_1^k, x_2^k, $ $ \ldots, x_n^k),$ for each $1\leq k\leq q.$
		Let $ U = (u_{ij})_{ 1 \leq i \leq q, 1\leq j \leq n}$ such that $ u_{ij}= x_j^i.$  
		If the $ r$-th position and the $ s$-th position are equivalent then from the description of $\widetilde{x_k},$ defined in the Theorem \ref{norming:set} it is easy to observe that  $(x_r^1, x_r^2, \ldots, x_r^m) = \pm (x_s^1, x_s^2, \ldots, x_s^m). $ 
		Therefore, it is easy to observe that $ rank(U) \leq d.$ In other words,  there exist at most $d $ number of linearly independent elements in the norming set of $ \mathbb{Y}.$ 
	\end{proof}

	\begin{theorem}\label{coproxi}
		Let $\mathbb{{Y}}  $ be a subspace of $ \ell_1^n $  and let $ \mathcal{A} = \{  \widetilde{a_1}, \widetilde{a_2}, \ldots, \widetilde{a_m} \}  $  be a basis of $\mathbb{{Y}}  $ with $\mathcal{Z}_{\mathcal{A}} = \emptyset.$ 	Suppose that $\mathcal{N}$ is a minimal  norming set of $\mathbb{{Y}}$ and $\dim(span~ \mathcal{N}) = q.$ Then $ \mathbb{{Y}}$ is a coproximinal subspace if and only if $q=m.$
	\end{theorem}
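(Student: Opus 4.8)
Theorem \ref{coproxi} characterizes coproximinality of $\mathbb{Y}$ by the condition that the minimal norming set spans a $q$-dimensional space with $q = m = \dim \mathbb{Y}$. The plan is to interpret coproximinality through the linear system furnished by Theorem \ref{characterization}. Write $\mathcal{N} = \{\pm \widetilde{x_1}, \ldots, \pm \widetilde{x_q}\}$ and, for each $p \in \{1, \ldots, q\}$ and each $k \in \{1, \ldots, m\}$, set $c_{pk} = \sum_{i=1}^n a_i^k x_i^p$. Then Theorem \ref{characterization} says that $\sum_k \alpha_k \widetilde{a_k}$ is a best coapproximation to $\widetilde{b}$ precisely when the vector $\widetilde{\alpha} = (\alpha_1, \ldots, \alpha_m)^t$ solves $C\widetilde{\alpha} = \widetilde{d}$, where $C = (c_{pk})$ is the $q \times m$ coefficient matrix and $\widetilde{d} = \big(\sum_i b_i x_i^p\big)_{p=1}^q$ is the right-hand side determined by $\widetilde{b}$. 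So coproximinality of $\mathbb{Y}$ is exactly the statement that this linear system is solvable for every $\widetilde{b} \in \ell_1^n$.

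First I would establish the sufficiency direction ($q = m \Rightarrow$ coproximinal). When $\dim(\mathrm{span}\,\mathcal{N}) = q$, the $q$ vectors $\widetilde{x_1}, \ldots, \widetilde{x_q}$ are linearly independent in $\mathbb{R}^n$. The key observation is that the $p$-th coordinate of $C\widetilde{\alpha}$ equals $\big(\sum_k \alpha_k \widetilde{a_k}\big)(\widetilde{x_p})$ viewed through the pairing, and likewise the $p$-th entry of $\widetilde{d}$ is $\widetilde{b}(\widetilde{x_p})$; here I use $\mathcal{Z}_{\mathcal{A}} = \emptyset$ together with Proposition \ref{number} to control ranks. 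When $q = m$, I claim $C$ is a square invertible matrix: its columns encode how the $m$ basis functionals $g_1, \ldots, g_m$ act on the $q = m$ independent extreme points, and independence of the $\widetilde{x_p}$ combined with linear independence of the basis forces $\det C \neq 0$. Invertibility of $C$ means $C\widetilde{\alpha} = \widetilde{d}$ has a (unique) solution for every right-hand side $\widetilde{d}$, hence for every $\widetilde{b}$, giving coproximinality.

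For necessity ($\mathbb{Y}$ coproximinal $\Rightarrow q = m$), I argue contrapositively. By Proposition \ref{number} we always have $q = \dim(\mathrm{span}\,\mathcal{N}) \leq d \leq m$, since the number of nonequivalent components is at most $m$ and the rank of the coordinate matrix $U$ is at most $d$. Wait — more carefully, $q$ here denotes the dimension of the span, and we need $q \leq m$ in general; if $q < m$, then the coefficient matrix $C$ has rank at most $q < m$, so it has a nontrivial left null vector (a row relation). I would then exhibit a target $\widetilde{b}$ whose data vector $\widetilde{d}$ violates the corresponding compatibility condition, making $C\widetilde{\alpha} = \widetilde{d}$ unsolvable and producing an element of $\ell_1^n$ with no best coapproximation. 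Concretely, since the rows of $C$ satisfy a dependence but the map $\widetilde{b} \mapsto \widetilde{d}$ ranges over all of $\mathbb{R}^q$ as $\widetilde{b}$ varies (again using $\mathcal{Z}_{\mathcal{A}} = \emptyset$ so that the $\widetilde{x_p}$ separate enough coordinates), I can choose $\widetilde{b}$ so that $\widetilde{d}$ lies outside the column space of $C$.

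The main obstacle I anticipate is the necessity direction, specifically constructing the offending $\widetilde{b}$ and verifying that $\widetilde{d}$ genuinely escapes $\mathrm{range}(C)$. The delicate point is that $\widetilde{b}$ must be a legitimate element of $\ell_1^n$ while its induced data $\widetilde{d} = \big(\widetilde{b}(\widetilde{x_p})\big)_p$ is prescribed; I must confirm that the linear map $\widetilde{b} \mapsto \widetilde{d}$ is surjective onto $\mathbb{R}^q$, which relies on the independence of $\widetilde{x_1}, \ldots, \widetilde{x_q}$ (giving $q$ independent linear functionals of $\widetilde{b}$) — and this is precisely where $\dim(\mathrm{span}\,\mathcal{N}) = q$ enters. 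Granting surjectivity, I can hit any vector in $\mathbb{R}^q$, in particular one outside the proper subspace $\mathrm{range}(C)$, completing the contrapositive. The remaining bookkeeping (relating $C\widetilde\alpha$ to the best-coapproximation equations and invoking Theorem \ref{characterization}) is routine.
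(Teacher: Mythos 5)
Your overall framing --- reduce coproximinality to solvability of the $q\times m$ linear system $C\widetilde{\alpha}=\widetilde{d}$ furnished by Theorem \ref{characterization}, and analyze the matrix $C=(c_{pk})$ with $c_{pk}=\sum_{i=1}^n a_i^k x_i^p$ --- is exactly the paper's strategy (the paper packages $C$ as a linear map $T\in\mathbb{L}(\mathbb{R}^m,\mathbb{R}^q)$). But two steps are genuinely wrong as written. In the necessity direction you assert $q\le d\le m$, so that $q\ne m$ would force $q<m$. Proposition \ref{number} gives $q\le d$, but $d$ (the number of nonequivalent components, i.e.\ pairwise non-proportional rows of the $n\times m$ matrix $A$) can exceed $m$: already for $m=2$, $n=3$ the rows $(1,0),(0,1),(1,1)$ are pairwise nonequivalent, so $d=3>m$. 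In fact the inequality goes the other way: $C$ is always injective (see below), so $q\ge m$ holds in every case, and the only situation to exclude is $q>m$. Your analysis of the case $q<m$ is therefore vacuous, and it is also internally flawed: a $q\times m$ matrix with $q<m$ can have full row rank $q$, in which case $\mathrm{range}(C)=\mathbb{R}^q$, there is no nontrivial left null vector, and every system $C\widetilde{\alpha}=\widetilde{d}$ is solvable --- so no offending $\widetilde{b}$ could exist. The correct necessity argument (the paper's) runs: injectivity of $C$ gives $q\ge m$; coproximinality together with surjectivity of $\widetilde{b}\mapsto\widetilde{d}$ onto $\mathbb{R}^q$ (which, as you correctly note, follows from the linear independence of $\widetilde{x_1},\dots,\widetilde{x_q}$) forces $C$ to be onto $\mathbb{R}^q$, whence $q\le m$.

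Second, your justification that $\det C\ne 0$ when $q=m$ --- ``independence of the $\widetilde{x_p}$ combined with linear independence of the basis'' --- is not a valid reason: a square matrix $\bigl(g_k(\widetilde{x_p})\bigr)$ built from linearly independent functionals and linearly independent vectors can be singular (take $g_1=e_1^*$, $g_2=e_2^*$ and $\widetilde{x_1}=e_3$, $\widetilde{x_2}=e_4$ in $\mathbb{R}^4$, giving the zero matrix). What actually forces injectivity of $C$ is the norming property of $\mathcal{N}$: if $C\widetilde{\alpha}=0$ then $g=\sum_{k}\alpha_k g_k$ vanishes on every element of $\mathcal{N}$, yet some $\pm\widetilde{x_s}\in M_g$, so $\|g\|=|g(\widetilde{x_s})|=0$ and hence $\widetilde{\alpha}=0$. (Equivalently, the paper observes that $C\widetilde{\alpha}=0$ makes $\sum_k\alpha_k\widetilde{a_k}$ a best coapproximation to $\theta$ and invokes the uniqueness from Corollary \ref{cop}.) Once this is repaired, injectivity plus $q=m$ gives invertibility of $C$, and the sufficiency direction goes through as you describe.
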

	
	\begin{proof} 	Suppose that $\{ \widetilde{x_1}, \widetilde{x_2}, \ldots, \widetilde{x_q}\} $ is a linearly independent set in $\mathcal{N},$ where $ \widetilde{x_k}=(x_1^k, x_2^k, \ldots, x_n^k),$ where $ 1 \leq k \leq q.$	Let $T \in	\mathbb{L}(\mathbb{R}^m, \mathbb{R}^q)$ be a linear operator defined by 
		\[
		T(\widetilde{\alpha})= \bigg( 	\sum_{j=1}^{m} \alpha_j ( \sum_{i=1}^{n} a_i^j x_i^1), \sum_{j=1}^{m} \alpha_j ( \sum_{i=1}^{n} a_i^j x_i^2), \ldots, \sum_{j=1}^{m} \alpha_j (  \sum_{i=1}^{n} a_i^j x_i^q) \bigg),
		\]
		where $\widetilde{\alpha}= (\alpha_1, \alpha_2, \ldots, \alpha_m) \in \mathbb{R}^m.$
		Whenever $T(\alpha_1, \alpha_2, \ldots, \alpha_m)=0$ then it is easy to observe that  $ \sum_{k=1}^m \alpha_k \widetilde{a_k}$ is the best coapproximation to $\theta \in (\ell_{\infty}^n)^*$ out of $ \mathbb{{Y}}.$ Clearly, $  (\alpha_1, \alpha_2, \ldots, \alpha_m) = \theta \in \mathbb{R}^m$ and therefore, $ ker~T= \theta \in  \mathbb{R}^m. $ In other words, $ q \geq m.$
		To prove the necessary part of the theorem we only need to show $q \leq m.$
		Let us now take $(u_1, u_2, \ldots, u_q)\in \mathbb{R}^q.$
		 Then we choose $ \widetilde{b}=(b_1, b_2, \ldots, b_n) \in \ell_1^n $ such that $ \sum_{i=1}^{n} b_i x_i^p= u_p,$ for any $1 \leq p \leq q.$
		   Since $\{ \widetilde{x_1}, \widetilde{x_2}, \ldots, \widetilde{x_q}\} $ is  linearly independent, the existence of such $\widetilde{b}$ is always guaranteed.
		As $ \mathbb{{Y}}$ is coproximinal, following Theorem \ref{characterization} we obtain that for any $ \widetilde{b} \in \ell_1^n,$  there exists $ \widetilde{\alpha}=( \alpha_1, \alpha_2, \ldots, \alpha_m) \in \mathbb{R}^m$ satisfying  
		\begin{eqnarray*}
			\alpha_1\sum_{i=1}^n a_i^1 x_i^p + \alpha_2\sum_{i=1}^n a_i^2 x_i^p + \ldots + \alpha_m\sum_{i=1}^n a_i^m x_i^p = \sum_{i=1}^n b_i x_i^p,
		\end{eqnarray*}
		for any $p \in \{ 1, 2, \ldots, q\}.$  Therefore, $T(\widetilde{\alpha})= (u_1, u_2, \ldots, u_q), $ which implies that $T$ is onto.
		Consequently, $ q \leq m,$ establishing the necessary part of the theorem.
		
		\smallskip

		Let us now prove the sufficient part of the theorem.  Since $ q=m$ and $ker~T= \theta \in \mathbb{R}^m,$ it is immediate that $ T$ is invertible.   This implies that for any $ \widetilde{b} \in \ell_1^n,$ there exists $\widetilde{\alpha}=(\alpha_1, \alpha_2, \ldots, \alpha_m) \in \mathbb{R}^m$ satisfying 
		\[
		T(\widetilde{\alpha})= \bigg(\sum_{i=1}^n b_i x_i^1, \sum_{i=1}^n b_i x_i^2, \ldots, \sum_{i=1}^n b_i x_i^q\bigg),\]
		which implies 
		\begin{eqnarray*}
			\alpha_1\sum_{i=1}^n a_i^1 x_i^p + \alpha_2\sum_{i=1}^n a_i^2 x_i^p + \ldots + \alpha_m\sum_{i=1}^n a_i^m x_i^p = \sum_{i=1}^n b_i x_i^p,
		\end{eqnarray*}
		for every $p \in \{ 1, 2, \ldots, q\}.$ Since $\{ \widetilde{x_1}, \widetilde{x_2}, \ldots, \widetilde{x_q}\}$ is a basis of $span~ \mathcal{N}$, using Theorem \ref{characterization} we conclude that $\mathbb{{Y}}$ is a coproximinal subspace of $\ell_1^n.$
	\end{proof}

	We now present an explicit numerical example to illustrate the applicability of Theorem \ref{characterization} towards solving the best coapproximation problem in $ \ell_1^n, $ from a  computational point of view.

\begin{example}\label{example}
	Find the best coapproximation(s) to any given $\widetilde{b} \in \ell_1^6$ out of the subspace $ \mathbb{{Y}} = span\left\lbrace \widetilde{a_1}, \widetilde{a_2}, \widetilde{a_3} \right\rbrace$ of  $  \ell_1^6,$ where 
	$ \widetilde{a_1} = ( ~4, ~2,~ 1,~ -1,~ -4,~ 4),~ \widetilde{a_2} = (~ -1,~ 3, ~5, ~2, ~1,~6),~ \widetilde{a_3} = (~ 1, ~4, ~2, ~1, ~-1, ~8) \in \ell_1^6.$\\
	
	\textbf{Step 1:}
	Let $\psi(\widetilde{a_i}) = g_i \in (\ell_\infty^n)^*,$ where $\psi$ is the canonical isometric isomorphism from $\ell_1^6$ to $(\ell_\infty^6)^*. $ 
	Here, for any $(x_1, x_2, \ldots, x_6) \in \ell_\infty^6,$
	\[
	g_1(x_1, x_2, x_3, x_4, x_6) = 4x_1+ 2x_2+ x_3-x_4- 4x_5 + 4x_6,\]
	\[
	g_2(x_1, x_2, x_3, x_4, x_6) = -x_1+ 3x_2+ 5x_3+2x_4 +x_5 +6x_6,\]
	\[
	g_3(x_1, x_2, x_3, x_4, x_6) = x_1+ 4x_2+ 2x_3+x_4- x_5 + 8x_6.\]
	We first observe that $\mathcal{Z}_{\mathcal{A}} = \emptyset.$ From Theorem  \ref{norming:set}, suppose that $\mathcal{N}$ is the unique minimal norming set of $\mathbb{{Y}}.$\\
	
	\textbf{Step  2:} We observe that the $1$-st, $2$-nd, $3$-rd and $4$-th positions can be taken as the nonequivalent components.
	 The  hyperplanes corresponding to each components are:
	\[ H_1 = \bigg\{ (\beta_1, \beta_2, \beta_3) \in \mathbb{R}^3 : 4\beta_1- \beta_2+ \beta_3= 0 \bigg\},\]
	\[	H_2 = \bigg\{ (\beta_1, \beta_2, \beta_3) \in \mathbb{R}^3 : 2\beta_1+ 3 \beta_2+ 4\beta_3= 0 \bigg\},\]
	\[	H_3 = \bigg\{ (\beta_1, \beta_2, \beta_3) \in \mathbb{R}^3 : \beta_1 +5\beta_2+ 2\beta_3= 0 \bigg\}, \]
	\[	H_4 = \bigg\{ (\beta_1, \beta_2, \beta_3) \in \mathbb{R}^3 : -\beta_1+ 2\beta_2+ \beta_3= 0 \bigg\}.	\]
	\[
	H_5=\bigg\{ (\beta_1, \beta_2, \beta_3) \in \mathbb{R}^3 : -4\beta_1+ \beta_2- \beta_3= 0 \bigg\},
	\]
	and 
	\[
	H_6=\bigg\{ (\beta_1, \beta_2, \beta_3) \in \mathbb{R}^3 : 4\beta_1+ 6 \beta_2+ 8 \beta_3= 0 \bigg\},
	\]
	Clearly, $H_5 = H_1, H_6 = H_2$ and   $H_5^+ = H_1^-,$ $H_5^- = H_1^+;$ $H_6^+= H_2^+, H_6^- = H_2^-.$\\

	\textbf{ Step  3:}
	To solve the best coapproximation problem with the help of Theorem \ref{characterization}, we first need to find a basis of $span~\mathcal{N}.$ We observe that  there are four nonequivalent positions, and therefore, from Proposition \ref{number} we note that $\dim(span ~\mathcal{N}) \leq 4.$
	
	As mentioned in  Theorem \ref{norming:set},  we consider the sets $K_i= H_1^{\delta_{i_1}} \cap H_2^{\delta_{i_2}} \cap H_3^{\delta_{i_3}} \cap H_4^{\delta_{i_4}},$ where $\delta_{i_j} \in \{ +, -\},$ for any $j \in \{1,2,3,4\}.$  Although there are $2^4$ number of possible $K_i$'s, it is evident that we only need to take account of the nonempty $K_i$'s.  
	Moreover, we  associate each of these nonempty $K_i$'s with an extreme point $\widetilde{x_i}$ of $B_{\ell_\infty^6},$ as mentioned in   Theorem \ref{norming:set}.
	
	Suppose that $K_1= H_1^+ \cap H_2^+ \cap H_3^+ \cap H_4^+$ and it is straightforward to verify that $(1, 2, 3) \in K_1.$ Therefore, we obtain $\widetilde{x_1}= (1, 1, 1, 1, -1, 1) \in Ext(B_{\ell_\infty^6}).$ 
	
	In a similar manner, we take 
	\[K_2= H_1^+ \cap H_2^+ \cap H_3^+ \cap H_4^-,  \quad  ( 4, -1 , 1 ) \in K_2.\]
	Therefore, $\widetilde{x_2}= ( 1, 1, 1, -1, -1, 1).$
	Again 
	\[
	K_3 =  H_1^+ \cap H_2^+ \cap H_3^- \cap H_4^-, \quad ( 0, -1, \frac{3}{2}) \in K_3.
	\]
	So, $ \widetilde{x_3}= (1, 1, -1, -1, -1, 1).$
	Also take
	\[
	K_4=  H_1^+ \cap H_2^- \cap H_3^- \cap H_4^-, \quad (1, 0, -1) \in K_4.
	\]
	Therefore, we have $\widetilde{x_4}= ( 1, -1, -1, -1, -1, -1).$

	From  Theorem \ref{norming:set}, it is now immediate that $ \widetilde{x_1}, \widetilde{x_2}, \widetilde{x_3}, \widetilde{x_4} \in \mathcal{N}.$
	It is straightforward to check that $\{ \widetilde{x_1}, \widetilde{x_2}, \widetilde{x_3}, \widetilde{x_4}\} $ is  linearly independent. Therefore,
	$\{ \pm  ( 1, 1, 1, 1, \\ -1,  1), \pm ( 1, 1, 1, -1, -1, 1), \pm ( 1, 1,- 1, -1, -1, 1), \pm  ( 1, -1,- $ $ 1, -1, -1, -1)  \}
	$ 
	is a basis  of $span~\mathcal{N}$.\\

	\textbf{ Step  4:} In this final step, by considering a given $ \widetilde{b} \in \ell_1^6$  and thereafter applying Theorem \ref{characterization}, we  obtain the best coapproximation to $ \widetilde{b} $ out of $ \mathbb{{Y}}. $ In order to illustrate the various possibilities arising in the best coapproximation problem in $ \ell_1^6, $ it suffices to consider the following two particular cases.
	
	$Case~1:$
	Let  $\widetilde{b_1} = \left( 1, 2, 3, 4, 5, 6\right) \in \ell_1^6.$ 
		Then from Theorem \ref{characterization}, $\sum_{i=1}^{3}\alpha_i g_i $ is a best coapproximation to $ \widetilde{b}_1$ out of $ \mathbb{{Y}}$ if and only if $ \alpha_1, \alpha_2,\alpha_3 \in \mathbb{R}$ satisfies the following relations:
	\begin{eqnarray*}
		14\alpha_1 + ~ 14\alpha_2 + ~ 17\alpha_3  &=& 11 
		\\ 16\alpha_1 +~ 10\alpha_2 +~15\alpha_3 &=& 3
		\\	14\alpha_1 + \space 11\alpha_3 &=& -3
		\\	2\alpha_1 -~18\alpha_2 - ~13\alpha_3 &=& -19.
	\end{eqnarray*}
	
	Since there exist no such  $\alpha_1 ,\alpha_2 ,\alpha_3 \in \mathbb{R} $ satisfying the above relations, it follows that
	\[ \mathcal{R}_\mathbb{{Y}}(\widetilde{b_1}) = \phi. \]\\
	
	$Case~2:$ 	Let $ \widetilde{b_2} = ( 5, 4, 0, 0, 1, 5) \in \ell_1^6.$ 
		Then from Theorem \ref{characterization}, $\sum_{i=1}^{3}\alpha_i\widetilde{a_i} $ is a best coapproximation to $ \widetilde{b}_2$ out of $ \mathbb{{Y}}$ if and only if $ \alpha_1, \alpha_2,\alpha_3 \in \mathbb{R}$ satisfies the following relations:
	\begin{eqnarray*}
		14\alpha_1 + ~ 14\alpha_2 + ~ 17\alpha_3  &=& 13 
		\\ 16\alpha_1 +~ 10\alpha_2 +~15\alpha_3 &=& 13
		\\	14\alpha_1  \quad \quad  \quad \quad + 11\alpha_3 &=& 13
		\\	2\alpha_1 -~18\alpha_2 - ~13\alpha_3 &=& -5.
	\end{eqnarray*}
	
	Since there exist unique  $\alpha_1 ,\alpha_2 ,\alpha_3 \in \mathbb{R} $ satisfying the above relations, the best coapproximation to $\widetilde{b_2}$ out of $ \mathbb{{Y}}  $ is unique.  Moreover, $\alpha_1= \frac{1}{7}, \alpha_2=- \frac{3}{7}, \alpha_3=1$ and therefore
	\[
	\mathcal{R}_\mathbb{{Y}}(\widetilde{b_2}) = \frac{1}{7} \widetilde{a_1} - \frac{3}{7} \widetilde{a_2} + \widetilde{a_3}= (2, 3, 0, 0, -2, 6).
	\]
\end{example}

	We end this section with the following remark.
	
	\begin{remark}
		It is already known from   \cite{LT, PS} that  whenever the best coapproximation exists from a element $\widetilde{b}$ to a subspace $\mathbb{{Y}}$ of $\ell_1^n,$ then  there exist a norm one projection from $span\{\widetilde{b}, \mathbb{{Y}}\}$ to $\mathbb{{Y}}.$ However, it is also natural to look for the explicit description of the concerned projection. In view of the method developed here, we can now find the projection map explicitly. If we consider the subspace $\mathbb{Y}$ and element $ \widetilde{b_2}$ as in Example \ref{example}, then the norm one  projection $P$ from $span\{  \widetilde{b_2}, \mathbb{{Y}}\}$ to $\mathbb{{Y}}$ is given by: 
		\[  
		P(\widetilde{b_2}) = P(5, 4, 0, 0, 1, 5)=(2, 3, 0, 0, -2, 6) ; \quad \quad P(y)=y, \forall y \in \mathbb{{Y}} .
	\]\\
	\end{remark}
	
In the next section, we deal with subspaces $\mathbb{Y}$  for which the zero set $ \mathcal{Z}_{\mathcal{A}} \neq \emptyset,$ where $\mathcal{A}$ is a basis of $\mathbb{Y}.$

		\section*{Section-II}	
	
	We note  that the uniqueness of minimal norming set of $\mathbb{Y}$ plays a pivotal role in obtaining the complete characterization of the best coapproximation problem, when the zero set is empty.  On the other hand, whenever $\mathcal{Z}_{\mathcal{A}} \neq \emptyset,$ the subspace $\mathbb{Y}$ does not possess this property. However, a sufficient condition is clearly possible by choosing a minimal norming set of $\mathbb{Y}.$  We  tackle the problem in this section with a different technique, more precisely with the help of the norm of the space $ \ell_1^n,$ to obtain a complete characterization of the problem.  The following definitions and notations are needed  throughout this section to complete the desired characterization.

	\begin{definition}
		Let  $\mathbb{Y}$ be a subspace of $\ell_1^n$ and let $ \mathcal{A}= \{\widetilde{a_1}, \widetilde{a_2}, \ldots, \widetilde{a_m}\}$ be a basis of $\mathbb{Y}$ with $\mathcal{Z}_{\mathcal{A}} \neq \emptyset.$   Suppose that $ |\mathcal{Z}_{\mathcal{A}}|= r>0$ and $n-r = k.$ Without loss of generality we assume that $ \{ 1, 2, \ldots, n\} \setminus \mathcal{Z}_{\mathcal{A}}=\{ 1, 2, \ldots, k\}.$
		
		\begin{itemize}
			\item[(i)] We define a linear transformation  $\rho$ from $\ell_1^n$ to $\ell_1^n$ by 
			\[
			\rho(b_1, b_2, \ldots, b_n)=(c_1, c_2, \ldots, c_n),\] 
			where $ c_i= b_i,$ for any $i \notin \mathcal{Z}_{\mathcal{A}}$ and $c_i=0,$ for any $i \in \mathcal{Z}_{\mathcal{A}}.$	\\

			\item[(ii)]  We define a linear transformation  $\sigma $ from $\ell_1^n$ to $ \ell_1^k$ by 
			\[
			\sigma(b_1, b_2, \ldots, b_n) = (b_1, b_2, \ldots, b_k).
			\] 
			
			\item[(iii)]  For a given $\widetilde{b}=(b_1, b_2, \ldots, b_n) \in \ell_1^n,$ we introduce a set $ \mathcal{P}_{\widetilde{b}} \subset \ell_1^n,$ defined as  
			\[ \mathcal{P}_{\widetilde{b}}:  = \bigg\{ \widetilde{y}=(y_1, y_2, \ldots, y_n) \in \ell_1^n : y_i=b_i ~\forall j \notin \mathcal{Z}_{\mathcal{A}} \bigg\}.\]
			
		\end{itemize}
	\end{definition}

	We note the following  simple but useful properties in the form of a  proposition. 
	\begin{prop}	\label{prop} 
		Let  $\mathbb{Y}$ be a subspace of $\ell_1^n$ and let $ \mathcal{A}= \{\widetilde{a_1}, \widetilde{a_2}, \ldots, \widetilde{a_m}\}$ be a basis of $\mathbb{Y}$ with $\mathcal{Z}_{\mathcal{A}} \neq \emptyset.$   Suppose that $ |\mathcal{Z}_{\mathcal{A}}|= r>0$ and $n-r = k.$ 	For any $f \in (\ell_\infty^n)^*,$
		\begin{itemize}
			\item[(i)] $\rho(\widetilde{a_i})=\widetilde{a_i}$ and $\| \rho(\widetilde{b})\|\leq \|\widetilde{b}\|.$ 
			\item [(ii)] $\|\sigma(\widetilde{b})\| \leq \|\widetilde{b}\|$ and $\|\sigma(\widetilde{a_i})\|=\|\widetilde{a_i}\|.$
			\item [(iii)] $\sigma(\rho(\widetilde{b}))= \sigma(\widetilde{b})$ and $\rho(\rho(\widetilde{b}))= \rho(\widetilde{b}).$
			\item [(iv)] for any $ \widetilde{y} \in \mathcal{P}_{\widetilde{b}},  ~\rho(\widetilde{b})= \rho(\widetilde{y}).$
			\item[(v)] $\mathcal{Z}_{\sigma(\mathcal{A})} = \phi,$ where $ \sigma(\mathcal{A})= \{ \sigma(\widetilde{a_1}), \sigma(\widetilde{a_2}), \ldots, \sigma(\widetilde{a_m})\}.$
		\end{itemize}
	\end{prop}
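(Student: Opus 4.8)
The plan is to verify each of the five assertions directly from the definitions of $\rho$, $\sigma$, and $\mathcal{P}_{\widetilde{b}}$, and from the structure of the zero set $\mathcal{Z}_{\mathcal{A}}$. These are essentially bookkeeping statements about coordinate projections in $\ell_1^n$, so the work is routine once the right observations are isolated. Throughout I would write $\widetilde{b}=(b_1,\ldots,b_n)$ and use the convention that the nonzero components of $\mathcal{A}$ are indexed by $\{1,2,\ldots,k\}$ while $\mathcal{Z}_{\mathcal{A}}=\{k+1,\ldots,n\}$.

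First I would dispose of (i) and (v), which follow directly from the meaning of the zero set. By definition of $\mathcal{Z}_{\mathcal{A}}$, each basis vector $\widetilde{a_i}$ has its coordinates supported on $\{1,\ldots,k\}$; that is, the $j$-th coordinate of every $\widetilde{a_i}$ vanishes for $j\in\mathcal{Z}_{\mathcal{A}}$. Since $\rho$ only zeroes out the coordinates indexed by $\mathcal{Z}_{\mathcal{A}}$ and leaves the others fixed, applying $\rho$ to $\widetilde{a_i}$ changes nothing, giving $\rho(\widetilde{a_i})=\widetilde{a_i}$. The inequality $\|\rho(\widetilde{b})\|\leq\|\widetilde{b}\|$ is then immediate from the $\ell_1$-norm: zeroing coordinates can only decrease the sum of absolute values, i.e.\ $\sum_{i\notin\mathcal{Z}_{\mathcal{A}}}|b_i|\leq\sum_{i=1}^n|b_i|$. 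For (v), note that the $j$-th component of $\sigma(\mathcal{A})$ is the $j$-th component of $\mathcal{A}$ for $1\leq j\leq k$, and since we removed exactly the indices in $\mathcal{Z}_{\mathcal{A}}$, none of the surviving components is the zero row; hence $\mathcal{Z}_{\sigma(\mathcal{A})}=\emptyset$.

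Next I would handle (ii) and (iii), which are of the same elementary character. The inequality $\|\sigma(\widetilde{b})\|\leq\|\widetilde{b}\|$ again follows since $\sigma$ discards the last $r$ coordinates, so $\sum_{i=1}^k|b_i|\leq\sum_{i=1}^n|b_i|$; and $\|\sigma(\widetilde{a_i})\|=\|\widetilde{a_i}\|$ holds because, as observed in (i), $\widetilde{a_i}$ already has all its mass on the first $k$ coordinates, so deleting the others removes only zeros. For (iii), both equalities are coordinatewise identities: $\rho$ kills the coordinates in $\mathcal{Z}_{\mathcal{A}}$ and fixes the first $k$, so composing with $\sigma$ (which reads off the first $k$ coordinates) gives the same output whether or not $\rho$ is applied first, yielding $\sigma(\rho(\widetilde{b}))=\sigma(\widetilde{b})$; and $\rho$ is idempotent since zeroing an already-zeroed coordinate leaves it zero, giving $\rho(\rho(\widetilde{b}))=\rho(\widetilde{b})$.

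Finally (iv) follows from the definition of $\mathcal{P}_{\widetilde{b}}$: any $\widetilde{y}\in\mathcal{P}_{\widetilde{b}}$ agrees with $\widetilde{b}$ on every coordinate outside $\mathcal{Z}_{\mathcal{A}}$, and $\rho$ retains precisely those coordinates while zeroing the rest, so $\rho(\widetilde{y})$ and $\rho(\widetilde{b})$ coincide in every slot. I do not anticipate a genuine obstacle here; the only point requiring slight care is keeping the indexing convention consistent, namely that the surviving coordinates are exactly $\{1,\ldots,k\}$ and that the defining property of $\mathcal{Z}_{\mathcal{A}}$ forces the basis vectors to be supported there. Once that is fixed, every claim reduces to a one-line coordinate comparison or the monotonicity of the $\ell_1$-norm under coordinate deletion.
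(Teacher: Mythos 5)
Your verification is correct, and it is exactly the routine coordinate-by-coordinate check the paper has in mind: the authors state this proposition without proof, treating all five items as immediate consequences of the definitions of $\rho$, $\sigma$, $\mathcal{P}_{\widetilde{b}}$ and the zero set. Your proposal simply supplies the omitted details (in particular the key observation that each $\widetilde{a_i}$ is supported outside $\mathcal{Z}_{\mathcal{A}}$), so there is nothing to compare beyond noting that your write-up is a faithful expansion of what the paper leaves implicit.
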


	In the following theorem, a sufficient condition for the best coapproximation to an element $f$ out of a subspace $\mathbb{Y}$  is given.
	
	\begin{theorem}\label{sufficient}
		Let  $\mathbb{Y}$ be a subspace of $\ell_1^n$ and let $ \mathcal{A}= \{\widetilde{a_1}, \widetilde{a_2}, \ldots, \widetilde{a_m}\}$ be a basis of $\mathbb{Y}$ with $|\mathcal{Z}_{\mathcal{A}}| = r > 0.$   Then for any $\widetilde{b} \in \ell_1^n, $ $\sum_{i=1}^{m} \alpha_i \widetilde{a_i}$ is a best coapproximation to $\widetilde{b}$ out of $\mathbb{{Y}}$ if $\sum_{i=1}^{m} \alpha_i \sigma (\widetilde {a_i})$ is a best coapproximation to $ \sigma(\widetilde{b})$ out of  $ \sigma(\mathbb{Y}),$ where  $ \sigma(\mathbb{Y}) = span\{\sigma(\widetilde{a_1}), \sigma(\widetilde{a_2}), \ldots, \sigma(\widetilde{a_m})\}. $
	\end{theorem}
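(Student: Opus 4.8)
The plan is to reduce both coapproximation conditions to the defining inequality and then exploit the block structure of the $\ell_1$ norm induced by the zero set. First I would record the single structural fact that drives the whole argument: since $\mathcal{Z}_{\mathcal{A}}$ consists precisely of those coordinates on which every basis vector $\widetilde{a_i}$ vanishes, every element $z \in \mathbb{Y}$ satisfies $z_j = 0$ for all $j \in \mathcal{Z}_{\mathcal{A}}$. Consequently, for any $z \in \mathbb{Y}$ one has $\|z\| = \sum_{i=1}^{k}|z_i| = \|\sigma(z)\|$; that is, $\sigma$ is norm-preserving on $\mathbb{Y}$, which strengthens part (ii) of Proposition \ref{prop} from the individual $\widetilde{a_i}$ to all of $\mathbb{Y}$.

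Next I would restate the two conditions in terms of the definition of best coapproximation. Writing $y_0 = \sum_{i=1}^m \alpha_i \widetilde{a_i}$, the conclusion to be proved is $\|y_0 - y\| \leq \|\widetilde{b} - y\|$ for every $y \in \mathbb{Y}$. Since $\sigma$ maps $\mathbb{Y}$ onto $\sigma(\mathbb{Y})$, the hypothesis reads $\|\sigma(y_0) - \sigma(y)\| \leq \|\sigma(\widetilde{b}) - \sigma(y)\|$, i.e. $\|\sigma(y_0 - y)\| \leq \|\sigma(\widetilde{b} - y)\|$, for every $y \in \mathbb{Y}$.

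The heart of the argument is then a short estimate. On the left, $y_0 - y \in \mathbb{Y}$, so by the structural fact $\|y_0 - y\| = \|\sigma(y_0 - y)\|$. On the right, splitting the $\ell_1$ norm across the coordinates indexed by $\{1,\dots,k\}$ and by $\mathcal{Z}_{\mathcal{A}}$, and using $y_j = 0$ for $j \in \mathcal{Z}_{\mathcal{A}}$, gives
\[
\|\widetilde{b}-y\| = \|\sigma(\widetilde{b}-y)\| + \sum_{j \in \mathcal{Z}_{\mathcal{A}}} |b_j| \geq \|\sigma(\widetilde{b}-y)\|.
\]
Chaining these with the hypothesis yields $\|y_0-y\| = \|\sigma(y_0-y)\| \leq \|\sigma(\widetilde{b}-y)\| \leq \|\widetilde{b}-y\|$, which is exactly the desired inequality for all $y \in \mathbb{Y}$, so $y_0$ is a best coapproximation to $\widetilde{b}$ out of $\mathbb{Y}$.

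I do not expect a genuine obstacle here; the implication is one-directional precisely because the nonnegative term $\sum_{j\in\mathcal{Z}_{\mathcal{A}}}|b_j|$ is discarded in the final inequality, so the argument cannot be reversed in general and the theorem correctly asserts only a sufficient condition. The one point requiring care is the structural observation that elements of $\mathbb{Y}$ (and in particular the differences $y_0 - y$) vanish on the zero set, since this is what upgrades the general inequality $\|\sigma(\cdot)\| \leq \|\cdot\|$ to an \emph{equality} on $\mathbb{Y}$ and makes the left-hand side collapse cleanly onto the reduced problem in $\ell_1^k$.
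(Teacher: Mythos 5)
Your proposal is correct and follows essentially the same route as the paper: the paper's proof likewise rewrites $\|\sum\beta_i\widetilde{a_i}-\sum\alpha_i\widetilde{a_i}\|$ as $\|\sigma(\cdot)\|$ using that elements of $\mathbb{Y}$ vanish on $\mathcal{Z}_{\mathcal{A}}$, applies the hypothesis in $\sigma(\mathbb{Y})$, and then uses $\|\sigma(u)\|\leq\|u\|$ to conclude. Your only cosmetic difference is making the discarded term $\sum_{j\in\mathcal{Z}_{\mathcal{A}}}|b_j|$ explicit rather than invoking Proposition \ref{prop}(ii).
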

	
	\begin{proof}
		Since $\sum_{i=1}^{m} \alpha_i \sigma(\widetilde{a_i})$ is a best coapproximation to $ \sigma(\widetilde{b})$ out of $ \sigma(\mathbb{Y}),$  so	for any $ \beta_1, \beta_2, \ldots, \beta_m \in \mathbb{R},$ we have
		\begin{eqnarray*}
			\| \sum_{i=1}^{m}\beta_i \widetilde{a_i} - \sum_{i=1}^{m}\alpha_i \widetilde{a_i} \| =  \| \sum_{i=1}^{m}\beta_i \sigma(\widetilde{a_i}) - \sum_{i=1}^{m}\alpha_i \sigma(\widetilde{a_i})\|   &\leq&  \| \sum_{i=1}^{m}\beta_i \sigma(\widetilde{a_i}) - \sigma(\widetilde{b}) \| \\ &=& \|\sigma( \sum_{i=1}^{m} \beta_i \widetilde{a_i} - b)\| \\
			&\leq&  \|\sum_{i=1}^{m} \beta_i \widetilde{a_i} - \widetilde{b}\|. 
		\end{eqnarray*}
		In other words, $\sum_{i=1}^{m} \alpha_i \widetilde{a_i}$ is a best coapproximation to $ \widetilde{b} $ out of $\mathbb{{Y}}.$ This establishes our theorem.
	\end{proof}

\begin{remark}
	Observe that the zero set  corresponding to a basis of $ \sigma(\mathbb{Y})$ is empty and so following the method discussed in Section I, we can find the best coaproximation    to $ \sigma(\widetilde{b})$ out of  $ \sigma(\mathbb{Y}),$ which in turn allows us to exactly find the best coapproximation to $\widetilde{b}$ out of $\mathbb{{Y}}.$
\end{remark}
	
	In order to characterize the best coapproximation problem in $ \ell_1^n, $ we require the following lemma.

	\begin{lemma}\label{lemma1}
		Let  $\mathbb{Y}$ be a subspace of $\ell_1^n$ and let $ \mathcal{A}= \{\widetilde{a_1}, \widetilde{a_2}, \ldots, \widetilde{a_m}\}$ be a basis of $\mathbb{Y}$ with $|\mathcal{Z}_{\mathcal{A}}| = r > 0.$   Suppose that for $\widetilde{b} \in \ell_1^n, $ there exists no best coapproximation to $ \sigma(\widetilde{b})$ out of $ \sigma(\mathbb{Y}) = span \{ \sigma(\widetilde{a_1}), \sigma(\widetilde{a_2}), \ldots, \sigma(\widetilde{a_m}) \}.$ Then there exists  $ \delta > 0$ such that for any $ y \in \mathcal{B}_\delta(\rho(\widetilde{b})) \bigcap~ \mathcal{P}_{\widetilde{b}} ,$ there exists no best coapproximation to $ y $ out of $\mathbb{Y}. $
	\end{lemma}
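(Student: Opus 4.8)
The plan is to pass through the reduced space $\ell_1^k$ and quantify the failure of coapproximation there into a \emph{uniform} gap. Set $z:=\sigma(\widetilde{b})\in\ell_1^k$ and $W:=\sigma(\mathbb{Y})=span\{\sigma(\widetilde{a_1}),\ldots,\sigma(\widetilde{a_m})\}$, and for $\beta=(\beta_1,\ldots,\beta_m)\in\mathbb{R}^m$ write $w_\beta=\sum_{i=1}^m\beta_i\sigma(\widetilde{a_i})$, so that $\{w_\beta:\beta\in\mathbb{R}^m\}=W$. First I would record the exact meaning of ``$\sum\alpha_i\widetilde{a_i}$ is a best coapproximation to $y$'' for $y\in\mathcal{P}_{\widetilde{b}}$. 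Since each $\widetilde{a_i}$ vanishes on $\mathcal{Z}_{\mathcal{A}}$, the vector $\sum(\alpha_i-\beta_i)\widetilde{a_i}$ is supported on $\{1,\ldots,k\}$, while $\sigma(y)=\sigma(\widetilde{b})=z$ for $y\in\mathcal{P}_{\widetilde{b}}$. Splitting the $\ell_1$ norm over $\{1,\ldots,k\}$ and $\mathcal{Z}_{\mathcal{A}}$ then turns the coapproximation inequality $\|\sum\alpha_i\widetilde{a_i}-\sum\beta_i\widetilde{a_i}\|\le\|y-\sum\beta_i\widetilde{a_i}\|$ (for all $\beta$) into
\[
\|w_\alpha-w_\beta\|\le\|z-w_\beta\|+C_y\quad\text{for all }\beta\in\mathbb{R}^m,\qquad C_y:=\sum_{i\in\mathcal{Z}_{\mathcal{A}}}|y_i|.
\]
The key bookkeeping point, which I would verify directly, is that for $y\in\mathcal{P}_{\widetilde{b}}$ one has $C_y=\|y-\rho(\widetilde{b})\|$, since $y$ and $\rho(\widetilde{b})$ agree on $\{1,\ldots,k\}$ and $\rho(\widetilde{b})$ vanishes on $\mathcal{Z}_{\mathcal{A}}$.

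Next I would introduce the gap function on $W$,
\[
\Phi(w):=\sup_{s\in W}\big(\|s-w\|-\|s-z\|\big),\qquad w\in W,
\]
and establish its basic properties. It is finite, because the reverse triangle inequality gives $\|s-w\|-\|s-z\|\le\|w-z\|$, so $\Phi(w)\le\|w-z\|$; it is $1$-Lipschitz, hence continuous, as a finite supremum of the $1$-Lipschitz maps $w\mapsto\|s-w\|-\|s-z\|$; and it is convex, being a supremum of convex functions. Crucially, taking $s=\theta\in W$ yields $\Phi(w)\ge\|w\|-\|z\|$, so $\Phi$ is coercive on $W$. The hypothesis that there is \emph{no} best coapproximation to $z=\sigma(\widetilde{b})$ out of $W=\sigma(\mathbb{Y})$ says precisely that no $w\in W$ satisfies $\|s-w\|\le\|s-z\|$ for all $s\in W$, i.e. $\Phi(w)>0$ for every $w\in W$.

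The main step, and the real obstacle, is to upgrade this pointwise positivity to a strictly positive infimum; without a uniform gap, adding an arbitrarily small $C_y$ could restore the inequality and destroy the argument. This is where finite-dimensionality does the work: $W$ is finite-dimensional, $\Phi$ is continuous and coercive, so its sublevel sets are compact and $\Phi$ attains its minimum at some $w^\ast\in W$. Setting $\delta:=\Phi(w^\ast)=\min_{w\in W}\Phi(w)>0$ gives the desired uniform gap, and $\delta$ depends only on $z$ and $W$, not on the particular $y$. To finish, I would argue by contradiction: suppose $y\in\mathcal{B}_\delta(\rho(\widetilde{b}))\cap\mathcal{P}_{\widetilde{b}}$ admits a best coapproximation $\sum\alpha_i\widetilde{a_i}$. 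The displayed reduction forces $\Phi(w_\alpha)\le C_y=\|y-\rho(\widetilde{b})\|<\delta$, contradicting $\delta=\min_{w\in W}\Phi(w)\le\Phi(w_\alpha)$. Hence no such best coapproximation exists for any $y$ in that ball, which is exactly the assertion of the lemma.
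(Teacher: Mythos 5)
Your proof is correct, but it follows a genuinely different route from the paper's. The paper first shows (by the same $\sigma$/$\rho$ bookkeeping you use) that $\rho(\widetilde{b})$ itself admits no best coapproximation out of $\mathbb{Y}$, and then obtains the ball $\mathcal{B}_\delta(\rho(\widetilde{b}))$ non-constructively, by invoking the compactness of each $\mathcal{R}_{\mathbb{Y}}(\widetilde{u})$ (citing Narang) to conclude that $\mathcal{D}(\mathcal{R}_{\mathbb{Y}})$ is closed, so its complement is open. You instead make the failure of coapproximation quantitative: after reducing the condition ``$\sum\alpha_i\widetilde{a_i}\in\mathcal{R}_{\mathbb{Y}}(y)$'' for $y\in\mathcal{P}_{\widetilde{b}}$ to $\Phi(w_\alpha)\le C_y=\|y-\rho(\widetilde{b})\|$ (a reduction that is sound, since every element of $\mathbb{Y}$ vanishes on $\mathcal{Z}_{\mathcal{A}}$), you show the gap function $\Phi$ is continuous, coercive and everywhere positive on the finite-dimensional space $W$, hence attains a positive minimum $\delta$. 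What your approach buys is a self-contained argument with an explicit, sharp radius: in fact your computation shows that $y\in\mathcal{P}_{\widetilde{b}}$ admits a best coapproximation if and only if $\|y-\rho(\widetilde{b})\|\ge\min_{w\in W}\Phi(w)$, which already proves the dichotomy of the theorem that follows this lemma in the paper, with $\delta_0=\min_{w\in W}\Phi(w)$ identified explicitly. What the paper's route buys is brevity, at the cost of leaning on an external compactness fact and yielding no information about the size of $\delta$.
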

	
	\begin{proof}
		Since there exists no best coapproximation to $ \sigma(\widetilde{b}) $ out of  $ \sigma(\mathbb{Y}),$  then  for any $ (\alpha_1, \alpha_2, \ldots, \alpha_m ) \in \mathbb{R}^m,$ there exists $ (\beta_1, \beta_2, \ldots, \beta_m ) \in \mathbb{R}^m$ such that
		\[ \| \sum_{i=1}^{m}\beta_i\sigma(\widetilde{a_i}) - \sum_{i=1}^{m}\alpha_i\sigma(\widetilde{a_i})\|~ >
		~ \| \sum_{i=1}^{m}\beta_i\sigma(\widetilde{a_i}) - \sigma(\widetilde{b}) \| = \| \sigma ( \sum_{i=1}^{m} \beta_i \widetilde{a_i} - \widetilde{b})\|.\]
		It is easy to observe that for any $\widetilde{u} \in \ell_1^n  ,$  if $\rho(\widetilde{u})= \widetilde{u}$ then $\| \sigma(\widetilde{u})\| = \|\widetilde{u}\|.$ Since $\rho\bigg(\sum_{i=1}^{m} \beta_i \widetilde{a_i} - \rho(\widetilde{b}) \bigg)=\sum_{i=1}^{m}\beta_i \widetilde{a_i} - \rho(\widetilde{b}), $ we have
		\[
		\| \sigma \bigg( \sum_{i=1}^{m} \beta_i \widetilde{a_i} - \widetilde{b} \bigg)\| = \| \sigma \bigg(\sum_{i=1}^{m} \beta_i \widetilde{a_i} - \rho(\widetilde{b}) \bigg)\|= \| \sum_{i=1}^{m}\beta_i \widetilde{a_i} - \rho(\widetilde{b}) \|
		\]
		and therefore,
		\[
		\| \sum_{i=1}^{m}\beta_i \widetilde{a_i} - \sum_{i=1}^{m}\alpha_i \widetilde{a_1}\| = \| \sum_{i=1}^{m}\beta_i\sigma(\widetilde{a_i}) - \sum_{i=1}^{m}\alpha_i \sigma(\widetilde{a_i})\| > \| \sum_{i=1}^{m}\beta_i \widetilde{a_i} - \rho(\widetilde{b})\|.
		\]
		In other words,\ there exists no best coapproximation to $ \rho(\widetilde{b}) $ out of $  \mathbb{Y}. $ Since for any $ \widetilde{u} \in \ell_1^n,$ $\mathcal{R}_{\mathbb{Y}}(\widetilde{u})$ is a compact set \cite{N}, it is immediate that $\mathcal{D}(\mathcal{R}_{\mathbb{Y}})$ is  closed. Therefore, there exists an open ball of radius $\delta > 0 $ (say) centered at $ \rho(\widetilde{b}), ~ \mathcal{B}_\delta(\rho(\widetilde{b}))$ such that for any $ y \in \mathcal{B}_\delta (\rho(\widetilde{b})) \bigcap \mathcal{P}_{\widetilde{b}}, $ there exists no best coapproximation to $ y $ out of $\mathbb{Y}. $ This proves our lemma.
		
	\end{proof}
	
	We next characterize the existence of the best coapproximation(s) in $ \ell_1^n $ in the following theorem.

	\begin{theorem}
		Let  $\mathbb{Y}$ be a subspace of $\ell_1^n$ and let $ \mathcal{A}= \{ \widetilde{a_1}, \widetilde{a_2}, \ldots, \widetilde{a_m}\}$ be a basis of $\mathbb{Y}$ with $|\mathcal{Z}_{\mathcal{A}}| = r > 0.$ Suppose that for $ \widetilde{b} \in \ell_1^n, $ there exists no best coapproximation to $ \sigma(\widetilde{b})$ out of $ \sigma(\mathbb{Y}) = span \{ \sigma(\widetilde{a_1}), \sigma(\widetilde{a_2}), \ldots, \sigma(\widetilde{a_m}) \}.$ Then there exists  $ \delta _0(>0) $ satisfying the following:
		\begin{itemize}
			\item[(i)]  for any $ y \in \mathcal{P}_{\widetilde{b}} $ 
			such that $ \| y-\rho(\widetilde{b}) \| < \delta_0,$
			there exists no best coapproximation to $ y $ out of $ \mathbb{{Y}},$
			
			\item[(ii)] for any $ y \in  \mathcal{P}_{\widetilde{b}} $ such that $ \|y-\rho(\widetilde{b})\| \geq \delta_0,$ there exists a best coapproximation to $ y $ out of $ \mathbb{{Y}}.$
		\end{itemize}
	\end{theorem}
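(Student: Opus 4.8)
The plan is to reduce the existence question for a point $y \in \mathcal{P}_{\widetilde{b}}$ to a single real-valued minimization over the coefficient space $\mathbb{R}^m$, and to read off $\delta_0$ as the value of that minimum. First I would fix $y \in \mathcal{P}_{\widetilde{b}}$ and set $R := \|y - \rho(\widetilde{b})\|$. Since each $\widetilde{a_i}$ is supported off $\mathcal{Z}_{\mathcal{A}}$ and, by Proposition \ref{prop}(iv), $\rho(y)=\rho(\widetilde{b})$, splitting the $\ell_1^n$ norm across $\mathcal{Z}_{\mathcal{A}}$ and its complement gives, for all $\alpha,\beta \in \mathbb{R}^m$,
\[
\Big\| \sum_{i=1}^m (\beta_i-\alpha_i)\widetilde{a_i} \Big\| = \Big\| \sum_{i=1}^m (\beta_i-\alpha_i)\sigma(\widetilde{a_i}) \Big\|,
\qquad
\Big\| \sum_{i=1}^m \beta_i \widetilde{a_i} - y \Big\| = \Big\| \sum_{i=1}^m \beta_i \sigma(\widetilde{a_i}) - \sigma(\widetilde{b}) \Big\| + R .
\]
Writing
\[
F(\alpha) := \sup_{\beta \in \mathbb{R}^m} \left[ \Big\| \sum_{i=1}^m (\beta_i-\alpha_i)\sigma(\widetilde{a_i}) \Big\| - \Big\| \sum_{i=1}^m \beta_i \sigma(\widetilde{a_i}) - \sigma(\widetilde{b}) \Big\| \right],
\]
the definition of best coapproximation then shows that $\sum_{i=1}^m \alpha_i \widetilde{a_i}$ is a best coapproximation to $y$ out of $\mathbb{Y}$ if and only if $F(\alpha)\leq R$. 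Hence $\mathcal{R}_{\mathbb{Y}}(y)\neq\emptyset$ exactly when $\inf_{\alpha}F(\alpha)\leq R$, which identifies the natural candidate $\delta_0 := \inf_{\alpha\in\mathbb{R}^m} F(\alpha)$ and reduces the theorem to analysing this infimum.

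Next I would record the analytic properties of $F$ on $\mathbb{R}^m$. Setting $v := \sum_i \alpha_i \sigma(\widetilde{a_i})$, the triangle inequality gives $F(\alpha)\leq \|v-\sigma(\widetilde{b})\| < \infty$, so $F$ is finite; as a supremum of functions each $1$-Lipschitz in $v$, $F$ is $1$-Lipschitz, hence continuous; and retaining any single $u_0\in\sigma(\mathbb{Y})$ in the supremum yields $F(\alpha)\geq \|u_0-v\|-\|u_0-\sigma(\widetilde{b})\|\to\infty$ as $\|\alpha\|\to\infty$, because $\sigma$ is injective on $\mathbb{Y}$ (if $\sum_i\alpha_i\sigma(\widetilde{a_i})=0$ then $\sum_i\alpha_i\widetilde{a_i}$ vanishes on $\{1,\dots,k\}$ and is supported there, so it is $\theta$, forcing $\alpha=\theta$), making $\alpha\mapsto v$ a linear isomorphism onto $\sigma(\mathbb{Y})$. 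Thus $F$ is continuous and coercive on $\mathbb{R}^m$, so it attains its infimum: $\delta_0 = F(\alpha^*)$ for some $\alpha^*\in\mathbb{R}^m$.

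Finally I would invoke the hypothesis to obtain $\delta_0>0$ and conclude. A best coapproximation to $\sigma(\widetilde{b})$ out of $\sigma(\mathbb{Y})$ is precisely an $\alpha$ with $F(\alpha)\leq 0$; since none exists, $F(\alpha)>0$ for every $\alpha$, and by attainment $\delta_0=F(\alpha^*)>0$ (this refines the qualitative conclusion of Lemma \ref{lemma1}). The dichotomy is then immediate from the characterization $\mathcal{R}_{\mathbb{Y}}(y)\neq\emptyset \iff F(\alpha)\leq R$ for some $\alpha$: if $R<\delta_0$ then $F(\alpha)\geq\delta_0>R$ for all $\alpha$, proving part (i); if $R\geq\delta_0$ then $\alpha^*$ satisfies $F(\alpha^*)=\delta_0\leq R$, so $\sum_i \alpha_i^* \widetilde{a_i}$ is a best coapproximation to $y$, proving part (ii). I expect the main obstacle to be the middle step, namely establishing coercivity and hence attainment of the infimum, since attainment at $R=\delta_0$ is exactly what makes the boundary case of part (ii) valid and the strict positivity of $\delta_0$ depends on this attainment together with the no-coapproximation hypothesis.
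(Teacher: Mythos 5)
Your proof is correct, and it takes a genuinely different route from the paper's. The paper argues set-theoretically: it defines $S=\{\delta : \mathcal{R}_{\mathbb{Y}}(y)=\emptyset \ \forall\, y\in\mathcal{B}_{\delta}(\rho(\widetilde{b}))\cap\mathcal{P}_{\widetilde{b}}\}$, shows $S\neq\emptyset$ via Lemma \ref{lemma1} (which in turn invokes compactness of $\mathcal{R}_{\mathbb{Y}}(\cdot)$ from the literature to get that $\mathcal{D}(\mathcal{R}_{\mathbb{Y}})$ is closed), shows $S$ is bounded above by exhibiting a $y$ at distance $\|\rho(\widetilde{b})\|$ for which $\theta$ is a best coapproximation, and sets $\delta_0=\sup S$. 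You instead isolate the single function $F(\alpha)=\sup_{\beta}\big[\|\sum_i(\beta_i-\alpha_i)\sigma(\widetilde{a_i})\|-\|\sum_i\beta_i\sigma(\widetilde{a_i})-\sigma(\widetilde{b})\|\big]$, prove via the norm-splitting identity $\|\sum_i\beta_i\widetilde{a_i}-y\|=\|\sum_i\beta_i\sigma(\widetilde{a_i})-\sigma(\widetilde{b})\|+R$ (the same computation the paper performs inside its proof) that existence for $y$ is equivalent to $F(\alpha)\leq R$ for some $\alpha$, and then take $\delta_0=\min F$, justified by continuity and coercivity. This buys you something real: the equivalence $\mathcal{R}_{\mathbb{Y}}(y)\neq\emptyset\iff R\geq\delta_0$ is automatically monotone in $R$ and handles the boundary case $R=\delta_0$ via attainment of the minimum, whereas the paper's argument as written only establishes that $S$ is nonempty and bounded and does not explicitly verify that \emph{every} $y$ with $\|y-\rho(\widetilde{b})\|\geq\sup S$ admits a best coapproximation (part (ii) in full), nor that $\sup S$ itself belongs to the ``existence'' side. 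Your version is also self-contained, replacing the citation-dependent closedness argument of Lemma \ref{lemma1} with the elementary observation that $F>0$ everywhere under the hypothesis and that a continuous coercive function attains its infimum. All the individual steps check out: the injectivity of $\sigma$ on $\mathbb{Y}$ (needed for coercivity) follows exactly as you say from $\mathcal{Z}_{\mathcal{A}}$ being the common zero set, and the $1$-Lipschitz bound on $F$ as a function of $v=\sum_i\alpha_i\sigma(\widetilde{a_i})$ is standard for suprema of $1$-Lipschitz families.
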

	
	\begin{proof}
		Let us define the set 
		\[ 
		S:= \bigg\{ \delta \in \mathbb{R}:  \mathcal{R}_{\mathbb{Y}}(y) = \emptyset ~ \forall ~  y \in \mathcal{B}_\delta(\rho(\widetilde{b})) \bigcap \mathcal{P}_{\widetilde{b}} \bigg\}.
		\]
		To prove the theorem, we only need to show that $S$ has a upper bound.
		From Lemma \ref{lemma1}, it is assured that $ S \neq \emptyset$ and if there exists no best coapproximation to $ \sigma(\widetilde{b})$ out of $\sigma(\mathbb{Y})$ then there exists no best coapproximation to $ \rho(\widetilde{b})$ out of $ \mathbb{{Y}}.$
		It is easy to observe that for all  $ \beta_1, \beta_2, \ldots, \beta_m \in \mathbb{R},$
		\[
		\| \sum_{i=1}^{m}\beta_i \widetilde{a_i} \| \leq \|\sum_{i=1}^{m}\beta_i \widetilde{a_i} -\rho(\widetilde{b}) \| + \| \rho(\widetilde{b})\|.
		\]
		Suppose that $\widetilde{b}=(b_1, b_2, \ldots, b_n).$
		For any  $y \in \mathcal{P}_{\widetilde{b}}$, we observe that $ \rho(y)= \rho(\widetilde{b}) \in \ell_1^n,$ i.e., $y_i=b_i,$ for any $ i \notin \mathcal{Z}_{\mathcal{A}}.$
		We now choose $ y \in \mathcal{P}_{\widetilde{b}}$ such that $ \| y -\rho(\widetilde{b}) \| = \| \rho(\widetilde{b})\|$ then 
		\begin{equation}\label{neweq1}
			\| \sum_{i=1}^{m}\beta_i \widetilde{a_i} \| \leq \|\sum_{i=1}^{m}\beta_i \widetilde{a_i} -\rho(\widetilde{b}) \| + \|\rho(\widetilde{b})\|  = \| \sum_{i=1}^{m}\beta_i \widetilde{a_i} -\rho(\widetilde{b}) \| +\| y - \rho(\widetilde{b})\| 
		\end{equation}
		for all  $ \beta_1, \beta_2, \ldots, \beta_m \in \mathbb{R}.$
		Now we observe that 
		\begin{eqnarray*}
			\| \sum_{i=1}^{m}\beta_i \widetilde{a_i} - y \| &=& \sum_{j=1}^{n} \bigg|  \bigg(  \sum_{i=1}^{m} \beta_i a_j^i-y_j  \bigg)  \bigg| \\
			& =& \sum_{j\notin \mathcal{Z}_{\mathcal{A}}} \bigg|  \bigg(  \sum_{i=1}^{m} \beta_i a_j^i-y_j  \bigg)  \bigg| + \sum_{j\in \mathcal{Z}_{\mathcal{A}}} \bigg|  \bigg(  \sum_{i=1}^{m} \beta_i a_j^i-y_j  \bigg)  \bigg|\\
			&=& \sum_{j\notin \mathcal{Z}_{\mathcal{A}}} \bigg|  \bigg(  \sum_{i=1}^{m} \beta_i a_j^i- b_j  \bigg)  \bigg| +  \sum_{j\in \mathcal{Z}_{\mathcal{A}}}|y_j|
			\end{eqnarray*}
			
	Therefore, we can easily obtain that 
			\begin{eqnarray}\label{neweqn2}	
			\| \sum_{i=1}^{m}\beta_i \widetilde{a_i} - y \|=	 \| \sum_{i=1}^{m}\beta_i \widetilde{a_i} -\rho(\widetilde{b}) \| +\| y - \rho(\widetilde{b})\|.
		\end{eqnarray}
		From equations (\ref{neweq1}) and (\ref{neweqn2}) we have 
		\[
		\| \sum_{i=1}^{m}\beta_i \widetilde{a_i} \| \leq 	\| \sum_{i=1}^{m}\beta_i \widetilde{a_i} - y \|.
		\]
		In other words,
		$ \theta=(0, 0, \ldots, 0) \in \ell_1^n$ is a best coapproximation to $ y $ out of $\mathbb{{Y}}.$ Take $\gamma > \| \rho(\widetilde{b})\|.$  Therefore, $ y \in \mathcal{B}_\gamma(\rho(\widetilde{b})) \bigcap \mathcal{P}_{\widetilde{b}}$ and consequently  $ \gamma $ is an upper bound of $ S.$ Let $ \sup S = \delta_0.$ Hence the theorem. 
	\end{proof}
	
	In  the following theorem  we characterize  the coproximinal subspace of $ \ell_1^n.$ 
	
	\begin{theorem}\label{coproximinal}
		Let  $\mathbb{Y}$ be a subspace of $\ell_1^n$ and let $ \mathcal{A}= \{ \widetilde{a_1}, \widetilde{a_2}, \ldots, \widetilde{a_m}\}$ be a basis of $\mathbb{Y}$ with $|\mathcal{Z}_{\mathcal{A}}| = r > 0.$ Let $n-r=k.$ Then $ \mathbb{{Y}} $ is a coproximinal subspace of $ \ell_1^n$ if and only if $ \sigma(\mathbb{Y}) = span\{\sigma(\widetilde{a_1}), \sigma(\widetilde{a_2}), \ldots, \sigma(\widetilde{a_m})\}$ is a coproximinal subspace of $ \ell_1^{k}.$
	\end{theorem}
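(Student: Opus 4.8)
The plan is to prove the two implications separately, exploiting the transfer results already established between $\mathbb{Y}$ and $\sigma(\mathbb{Y})$. First I would record the preliminary observations that $\sigma \colon \ell_1^n \to \ell_1^k$ is the coordinate projection onto the (re-indexed) non-zero components $\{1, 2, \ldots, k\}$ and is therefore surjective; that $\{\sigma(\widetilde{a_1}), \ldots, \sigma(\widetilde{a_m})\}$ is linearly independent and hence a basis of $\sigma(\mathbb{Y})$, since every $\widetilde{a_j}$ vanishes identically on the coordinates in $\mathcal{Z}_{\mathcal{A}} = \{k+1, \ldots, n\}$ and so the independence of the $\widetilde{a_j}$ is entirely witnessed by their first $k$ coordinates; and that $\mathcal{Z}_{\sigma(\mathcal{A})} = \emptyset$ by Proposition \ref{prop}(v). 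This last fact guarantees that coproximinality of $\sigma(\mathbb{Y})$ in $\ell_1^k$ is exactly the notion treated in Section I.

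For the sufficiency (``if'') direction, I would assume $\sigma(\mathbb{Y})$ is coproximinal in $\ell_1^k$ and fix an arbitrary $\widetilde{b} \in \ell_1^n$. Then $\sigma(\widetilde{b}) \in \ell_1^k$ admits a best coapproximation out of $\sigma(\mathbb{Y})$, which, since the $\sigma(\widetilde{a_i})$ span $\sigma(\mathbb{Y})$, can be written as $\sum_{i=1}^m \alpha_i \sigma(\widetilde{a_i})$. Theorem \ref{sufficient} then immediately yields that $\sum_{i=1}^m \alpha_i \widetilde{a_i}$ is a best coapproximation to $\widetilde{b}$ out of $\mathbb{Y}$. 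As $\widetilde{b}$ was arbitrary, $\mathbb{Y}$ is coproximinal.

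For the necessity (``only if'') direction I would argue by contraposition. Suppose $\sigma(\mathbb{Y})$ is not coproximinal in $\ell_1^k$; pick $v \in \ell_1^k$ admitting no best coapproximation out of $\sigma(\mathbb{Y})$ and, using surjectivity of $\sigma$, choose $\widetilde{b} \in \ell_1^n$ with $\sigma(\widetilde{b}) = v$. Since $\rho(\widetilde{b}) \in \mathcal{P}_{\widetilde{b}}$ lies at the centre of $\mathcal{B}_\delta(\rho(\widetilde{b}))$, Lemma \ref{lemma1} applied with $y = \rho(\widetilde{b})$ shows that $\rho(\widetilde{b}) \in \ell_1^n$ has no best coapproximation out of $\mathbb{Y}$; hence $\mathbb{Y}$ is not coproximinal. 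This is precisely the contrapositive of the desired implication.

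The argument is short because both directions reduce to already-proven statements, so the only real care needed, and the point I would flag as the main obstacle, is keeping straight which transfer result governs which direction: the \emph{existence} of a best coapproximation passes from the projected datum $\sigma(\widetilde{b})$ in $\ell_1^k$ up to $\widetilde{b}$ in $\ell_1^n$ via Theorem \ref{sufficient}, whereas the \emph{non-existence} passes via Lemma \ref{lemma1}, using the observation that $\rho(\widetilde{b}) \in \mathcal{B}_\delta(\rho(\widetilde{b})) \cap \mathcal{P}_{\widetilde{b}}$. One should also check that the choice of lift $\widetilde{b}$ of $v$ is immaterial, since the conclusion of the necessity argument concerns $\rho(\widetilde{b})$, which depends only on the coordinates fixed by $v$.
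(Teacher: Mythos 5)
Your proof is correct, and the sufficiency direction coincides with the paper's: both reduce to the inequality chain of Theorem \ref{sufficient} after writing the best coapproximation to $\sigma(\widetilde{b})$ in the spanning set $\{\sigma(\widetilde{a_1}),\ldots,\sigma(\widetilde{a_m})\}$. Where you diverge is the necessity direction. The paper argues directly: given $\widetilde{w}\in\ell_1^k$, it lifts to $\widetilde{b}$ with $\sigma(\widetilde{b})=\widetilde{w}$, takes a best coapproximation $\sum_i\alpha_i\widetilde{a_i}$ to $\rho(\widetilde{b})$ out of $\mathbb{Y}$, and then uses the identities $\rho\bigl(\sum_i\beta_i\widetilde{a_i}-\rho(\widetilde{b})\bigr)=\sum_i\beta_i\widetilde{a_i}-\rho(\widetilde{b})$ and $\|\sigma(\widetilde{u})\|=\|\widetilde{u}\|$ for $\rho$-fixed vectors to show that $\sum_i\alpha_i\sigma(\widetilde{a_i})$ is a best coapproximation to $\widetilde{w}$ out of $\sigma(\mathbb{Y})$; this has the advantage of exhibiting the coapproximant explicitly. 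You instead take the contrapositive and invoke Lemma \ref{lemma1}, observing that $\rho(\widetilde{b})$ itself lies in $\mathcal{B}_\delta(\rho(\widetilde{b}))\cap\mathcal{P}_{\widetilde{b}}$, so non-existence for $\sigma(\widetilde{b})$ downstairs forces non-existence for $\rho(\widetilde{b})$ upstairs. This is legitimate --- the first half of the proof of Lemma \ref{lemma1} is exactly the implication you need, and your route avoids repeating that norm computation --- but it only yields existence rather than an explicit formula, and it quietly relies on the fact that the relevant implication in Lemma \ref{lemma1} is proved for $\rho(\widetilde{b})$ before the $\delta$-ball is introduced. Your closing remarks (independence of the lift $\widetilde{b}$ of $v$, and the role of $\mathcal{Z}_{\sigma(\mathcal{A})}=\emptyset$ from Proposition \ref{prop}(v)) are sound and in fact make explicit some points the paper leaves implicit.
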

	
	\begin{proof}
		Let us first prove the necessary part of the theorem.  For any $ \widetilde{w} \in \ell_1^k,$ we choose $ \widetilde{b} \in \ell_1^n$ such that $ \sigma(\widetilde{b})= \widetilde{w}.$ Since $ \mathbb{Y}$ is a coproximinal subspace of $ \ell_1^n,$ there exist  $ \alpha_1, \alpha_2, \ldots, \alpha_m \in \mathbb{R}$ such that  $ \sum_{i=1}^{m} \alpha_i \widetilde{a_i} $ is a best coapproximation to $ \rho(\widetilde{b})$ out of $ \mathbb{{Y}}.$ Therefore, for any $ \beta_1, \beta_2, \ldots, \beta_m \in \mathbb{R},$
		\begin{eqnarray*}
			\| \sum_{i=1}^{m} \beta_i \sigma(\widetilde{a_i}) - \sum_{i=1}^{m} \alpha_i \sigma(\widetilde{a_i}) \| = \|\sum_{i=1}^{m} \beta_i \widetilde{a_i} - \sum_{i=1}^{m} \alpha_i \widetilde{a_i}\|   \leq   \| \sum_{i=1}^{m} \beta_i \widetilde{a_i} - \rho(\widetilde{b}) \| 
		\end{eqnarray*}
		Now $ \rho \bigg( \sum_{i=1}^{m} \beta_i \widetilde{a_i} - \rho(\widetilde{b})\bigg)= \sum_{i=1}^{m} \beta_i \widetilde{a_i} - \rho(\widetilde{b}),$ so we have 
		\begin{eqnarray*}
			\|\bigg( \sum_{i=1}^{m} \beta_i \widetilde{a_i} - \rho(\widetilde{b})\bigg)\| =  \| \sigma \bigg( \sum_{i=1}^{m} \beta_i \widetilde{a_i} - \rho(\widetilde{b})\bigg)\| &=& \|\sum_{i=1}^{m} \beta_i \sigma(\widetilde{a_i}) - \sigma(\rho(\widetilde{b}))\| \\ &=& \| \sum_{i=1}^{m} \beta_i \sigma(\widetilde{a_i}) - \sigma(\widetilde{b}) \|\\ & =& \| \sum_{i=1}^{m} \beta_i \sigma(\widetilde{a_i}) - \widetilde{w} \|.
		\end{eqnarray*}
		Therefore,
		\[
		\| \sum_{i=1}^{m} \beta_i\sigma(\widetilde{a_i}) - \sum_{i=1}^{m} \alpha_i\sigma(\widetilde{a_i}) \| \leq \| \sum_{i=1}^{m} \beta_i\sigma(\widetilde{a_i}) - \widetilde{w} \|.
		\]
		In other words, $\sum_{i=1}^{m} \alpha_i\sigma(\widetilde{a_i})$ is a best coapproximation to $\widetilde{w}$ out of $ \sigma(\mathbb{Y}).$ This establishes the necessary part of the theorem. 
		
		\smallskip
		
		To prove the sufficient part, let $ \widetilde{\eta} \in \ell_1^n.$ Since $\sigma(\mathbb{Y}) $ is a coproximinal subspace, there exist  $ \alpha_1, \alpha_2, \ldots, \alpha_m \in \mathbb{R}$ such that  $ \sum_{i=1}^{m} \alpha_i \sigma(\widetilde{a_i}) $ is a best coapproximation to $ \sigma(\widetilde{\eta})$ out of $ \sigma(\mathbb{Y}).$ Therefore, for any $ \beta_1, \beta_2, \ldots, \beta_m \in \mathbb{R},$
		\begin{eqnarray*}
			\|\sum_{i=1}^{m} \beta_i \widetilde{a_i} - \sum_{k=1}^{m} \alpha_i \widetilde{a_i} \|  & = & \| \sum_{i=1}^{m} \beta_i \sigma(\widetilde{a_i})-  \sum_{i=1}^{m} \alpha_i \sigma(\widetilde{a_i}) \| \\ & \leq & \| \sum_{i=1}^{m} \beta_i \sigma(\widetilde{a_i}) - \sigma(\widetilde{\eta}) \|  \\ & \leq &  \| \sum_{i=1}^{m} \beta_i \widetilde{a_i} - \widetilde{\eta} \|. 
		\end{eqnarray*}
		Therefore, $\sum_{i=1}^{m} \alpha_i \widetilde{a_i}$ is a best coapproximation to $\widetilde{\eta}$ out of $ \mathbb{{Y}}.$ This completes the theorem.
	\end{proof}
	
	Our final result in this section reads as follows.
	
	\begin{theorem}\label{co-Chebyshev}
		Let  $\mathbb{Y}$ be a subspace of $\ell_1^n$ and let $ \mathcal{A}= \{ \widetilde{a_1}, \widetilde{a_2}, \ldots, \widetilde{a_m}\}$ be a basis of $\mathbb{Y}$ with $|\mathcal{Z}_{\mathcal{A}}| = r > 0.$ Then  $ \mathbb{{Y}} $ is  not a  co-Chebyshev subspace of $ \ell_1^n.$ 
	\end{theorem}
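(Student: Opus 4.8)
The plan is to defeat the co-Chebyshev property not by analyzing the full coapproximation map, but by exhibiting a single, very concrete element of $\ell_1^n$ whose set of best coapproximations out of $\mathbb{Y}$ is infinite. Since being co-Chebyshev demands a \emph{unique} best coapproximation for every element, one such witness suffices. The natural witness is a standard basis vector sitting in a zero coordinate: because $|\mathcal{Z}_{\mathcal{A}}| = r > 0$, I would fix an index $j \in \mathcal{Z}_{\mathcal{A}}$ and take $\widetilde{b} = e_j \in \ell_1^n$, the vector with $1$ in the $j$-th slot and $0$ elsewhere. The decisive structural fact is that every $y = \sum_{k=1}^m \beta_k \widetilde{a_k} \in \mathbb{Y}$ has $j$-th coordinate $\sum_{k=1}^m \beta_k a_j^k = 0$, since $a_j^k = 0$ for all $k$ by the definition of the zero set (and this is genuinely a property of $\mathbb{Y}$ by Proposition \ref{propo}(ii)).

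First I would compute, for an arbitrary $y = (y_1, \ldots, y_n) \in \mathbb{Y}$, the quantity $\| e_j - y \|$. As $y_j = 0$, the $j$-th coordinate of $e_j - y$ equals $1$ and the remaining coordinates are exactly $-y_i$, so
\[
\| e_j - y \| = 1 + \sum_{i \neq j} |y_i| = 1 + \|y\|.
\]
Next I would verify that \emph{every} element $y_0$ of the closed unit ball $B_{\mathbb{Y}}$ is a best coapproximation to $e_j$: for any $y \in \mathbb{Y}$ the triangle inequality yields
\[
\| y_0 - y \| \leq \|y_0\| + \|y\| \leq 1 + \|y\| = \| e_j - y \|,
\]
which is precisely the defining inequality of best coapproximation. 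Hence $B_{\mathbb{Y}} \subseteq \mathcal{R}_{\mathbb{Y}}(e_j)$, and since $\dim \mathbb{Y} = m \geq 1$ makes $B_{\mathbb{Y}}$ infinite, the element $e_j$ admits infinitely many best coapproximations, so $\mathbb{Y}$ cannot be co-Chebyshev.

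I do not anticipate a genuine analytic obstacle; the entire content is the choice of test vector, after which the estimates are forced by the $\ell_1$ norm. The one point that merits careful phrasing is that the argument must not secretly assume coproximinality of $\mathbb{Y}$: here $e_j$ \emph{visibly} has best coapproximations (all of $B_{\mathbb{Y}}$), so whether or not best coapproximations exist for every element of $\ell_1^n$, uniqueness already fails at $e_j$, and that alone contradicts the co-Chebyshev condition. This is the sharp complement to Corollary \ref{cop}, where the vanishing of the zero set was exactly what restored uniqueness; once $\mathcal{Z}_{\mathcal{A}} \neq \emptyset$, the free coordinate supplies a whole ball of best coapproximations and uniqueness is irretrievably lost.
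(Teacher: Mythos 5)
Your proposal is correct and follows essentially the same route as the paper: the paper's proof takes an arbitrary $\widetilde{b}\notin\mathbb{Y}$ supported on $\mathcal{Z}_{\mathcal{A}}$ (your $e_j$ is the concrete special case with $\|\widetilde{b}\|=1$), uses the same disjoint-support identity $\|\,\sum_i\beta_i\widetilde{a_i}-\widetilde{b}\,\|=\|\sum_i\beta_i\widetilde{a_i}\|+\|\widetilde{b}\|$, and the same triangle-inequality step to show that every element of $\mathbb{Y}$ of norm at most $\|\widetilde{b}\|$ is a best coapproximation. The only difference is cosmetic, so nothing further is needed.
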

	
	\begin{proof}
		Let $\widetilde{b}=(b_1, b_2, \ldots, b_n) \not \in \mathbb{Y}$ be such that $ \rho(\widetilde{b})= \theta \in \ell_1^n,$ which implies that $b_i=0,$ for any $i \notin \mathcal{Z}_{\mathcal{A}}.$  Now for any $ \alpha_1, \alpha_2, \ldots, \alpha_m \in \mathbb{R}$ satisfying $ \| \sum_{i=1}^{m} \alpha_i \widetilde{a_i} \| \leq \|\widetilde{b}\|$ and  for any $ \beta_1, \beta_2, \ldots, \beta_m \in \mathbb{R},$ it is easy to observe that 
		\begin{eqnarray*}
			\|\sum_{i=1}^{m} \beta_i \widetilde{a_i} - \sum_{i=1}^{m} \alpha_i \widetilde{a_i} \|  & \leq & 	\|\sum_{i=1}^{m} \beta_i \widetilde{a_i} \| + \| \sum_{i=1}^{m} \alpha_i \widetilde{a_i} \| \\ & \leq & 	\|\sum_{i=1}^{m} \beta_i \widetilde{a_i}\| + \|\widetilde{b}\|.
		\end{eqnarray*}
		We also note that
		\begin{eqnarray*}
			\|\sum_{i=1}^{m} \beta_i \widetilde{a_i} - \widetilde{b} \| &=& \sum_{j=1}^{n} \bigg|  \bigg( \sum_{i=1}^{m} \beta_ia_j^i - b_j   \bigg)\bigg|\\ &=& 
				\sum_{j \notin \mathcal{Z}_{\mathcal{A}}} \bigg|  \bigg( \sum_{i=1}^{m} \beta_ia_j^i - b_j   \bigg)\bigg| + \sum_{j \in \mathcal{Z}_{\mathcal{A}}} \bigg|  \bigg( \sum_{i=1}^{m} \beta_ia_j^i - b_j   \bigg) \bigg| \\
				&=& \sum_{j \notin \mathcal{Z}_{\mathcal{A}}} \bigg|  \sum_{i=1}^{m} \beta_ia_j^i \bigg| +  \sum_{j \in \mathcal{Z}_{\mathcal{A}}} | b_j|\\
			&	=& \|\sum_{i=1}^{m} \beta_i \widetilde{a_i} \|+\| \widetilde{b} \|.
		\end{eqnarray*}
		Therefore, 
		\[
		\|\sum_{i=1}^{m} \beta_i \widetilde{a_i} - \sum_{i=1}^{m} \alpha_i \widetilde{a_i}\| \leq  \|\sum_{i=1}^{m} \beta_i \widetilde{a_i} - \widetilde{b} \|.
		\]
		In other words, for any $ \alpha_1, \alpha_2, \ldots, \alpha_m \in \mathbb{R} $ such that $ \| \sum_{i=1}^{m} \alpha_i \widetilde{a_i} \| \leq \|\widetilde{b}\|,  ~  \sum_{i=1}^{m} \alpha_i \widetilde{a_i} $ is a best coapproximations to $ \widetilde{b} $ out of $ \mathbb{Y}.$ Therefore, $\mathbb{{Y}}$ is not a co-Chebyshev subspace of $\ell_1^n.$ This establishes the theorem. 
		
	\end{proof}

We end this article with  examples of   both   coproximinal and not coproximinal subspaces for which the zero set is non-empty.
	
	\begin{example}\label{newexample}
		It can be easily verified by using the methods developed in this article that $ \mathbb{Y}_1$ is a coproximinal subspace of $\ell_1^7,$ whereas $ \mathbb{Y}_2$ is not, where 
		\[
		\mathbb{Y}_1= span\{ (1, 1, 2, 0, 4, -2, 0), (1, 2, 2, 0, 4, -4, 0)\},\]
		\[	\mathbb{Y}_2= span\{ (1,0, 2, 3, -1, -2, 0), (-1, 0, 1, 0, 1, -1, 0)\}.
		\] 
		Moreover, 
		\[
		\mathbb{{Y}}_3= span \{ (1, 1, 2, 4, -2), (1, 2, 2, 4, -4)\}
		\]
		is a co-Chebyshev subspace of $\ell_1^5,$ but $\mathbb{{Y}}_1$ is not co-Chebyshev.
	\end{example}
	
	  \section*{Conclusions}

	 The computational difficulty in solving the best coapproximation problem arises essentially from the non-linear nature of the inequalities associated with it. We have illustrated in this article that by applying Birkhoff-James orthogonality techniques, it is possible to reduce the much harder non-linear problem into a  system  of linear equations (see Theorem \ref{characterization}).\\

	 We have presented explicit examples to highlight the different possibilities for subspaces of $\ell_1^n,$ from the perspective of best coapproximation. Indeed, it follows from our observations that the newly introduced ``zero set'' of a subspace plays a fundamental role in the whole scheme of things (see Example \ref{example}, Corollary \ref{cop}). We have also explored the relationship between coproximinal subspaces and co-Chebyshev subspaces of $\ell_1^n,$ depending on the zero sets of the concerned subspaces. In particular, it is to be noted that there exists a coproximinal subspace of $\ell_1^n,$ which is not co-Chebyshev (see Example \ref{newexample}).\\

	In view of the methods developed here, applications of the concept of orthogonality in solving the best coapproximation problem in Banach spaces seem to be a promising direction of research, resulting in efficient algorithms which are computationally advantageous. We have presented several numerical examples in support of this, in the specific setting of $\ell_1^n$ spaces. For an analogous approach to the best coapproximation problem in $\ell_{\infty}^n$ spaces, the readers are referred to the recent article \cite{SSGP}. As a matter of fact, it may be interesting to apply Birkhoff-James orthogonality towards obtaining computationally efficient algorithmic solutions to the said problem, in the setting of other classical Banach spaces, such as the $ \ell_p^n $ spaces, where $ 1 < p(\neq 2)  < \infty. $ \\

	\noindent \textbf{Declarations.} \\
	The authors have no competing interests to declare that are relevant to the content of this article.


\begin{thebibliography}{99}
		
		\bibitem{B} Birkhoff, G.,  \textit{Orthogonality in linear metric spaces}, Duke Math. J., \textbf{1} (1935), 169--172.
		
		\bibitem{FF} Franchetti, C., Furi, M., \textit{Some characteristic properties of real Hilbert spaces},  Rev. Roumaine Math. Pures Appl., \textbf{17} (1972), 1045-1048.
		
		\bibitem{J}James, R. C., \textit{Orthogonality and linear functionals in normed linear spaces}, Trans.  Amer.  Math. Soc., \textbf{61} (1947), 265-292.
		
		\bibitem{LT} Lewicki, G., Trombetta, G., \textit{Optimal and one-complemented subspaces}, Monatsh. Math., \textbf{153} (2008), 115-132.
		
		
		\bibitem{N} Narang, T. D., \textit{On best coapproximation in normed linear spaces}, Rocky Mountain J. Math., \textbf{22} (1992), 265-287.
		
		\bibitem{PS} Papini, P. L., Singer, I., \textit{Best coapproximation in normed linear spaces}, Mh. Math., \textbf{88}  (1979), 27-44.
		
		\bibitem{RS} Rao, G.S., Swaminathan, M., \textit{Best coapproximation and Schauder bases in Banach spaces}, Acta Sci. Math., \textbf{54} (1990), 339-354.
		
		\bibitem{S} Sain, D., \textit{On best approximations to compact operators}, Proc. Amer. Math. Soc., \textbf{149} (2021), 4273-4286
		
		
		\bibitem{SSGP} Sain, D., Sohel, S., Ghosh, S., Paul, K., \textit{On
			best coapproximations in subspaces of diagonal matrices},  Linear Multilinear Algebra, DOI:10.1080/03081087.2021.2017835
		
		
	
	\end{thebibliography}
\end{document}